\newtheorem{theorem}{Theorem}
\newtheorem{proposition}[theorem]{Proposition}
\newtheorem{assumption}{Assumption}
\newtheorem{example}[theorem]{Example}
\newtheorem{lemma}[theorem]{Lemma}
\newtheorem{definition}[theorem]{Definition}
\newtheorem{corollary}[theorem]{Corollary}
\newtheorem{remark}[theorem]{Remark}
\numberwithin{theorem}{section}
\DeclareMathOperator*{\argmin}{arg\,min}
\title[Heat kernel asymptotics for scaling limits of isoradial graphs]{Heat kernel asymptotics for scaling limits of \\ isoradial graphs}
\author{Simon Schwarz}
\address{Institute for Mathematical Stochastics\\ 
	University of Göttingen\\
	Goldschmidtstraße 7\\ 37077 Göttingen\\
	Germany}
\email{simon.schwarz@uni-goettingen.de}
\author{Anja Sturm}
\address{Institute for Mathematical Stochastics\\ 
	University of Göttingen\\
	Goldschmidtstraße 7\\ 37077 Göttingen\\
	Germany}
\email{anja.sturm@mathematik.uni-goettingen.de}
\author{Max Wardetzky}
\address{Institute for Numerical and Applied Mathematics\\
	University of Göttingen \\
	Lotzestr. 16-18 \\ 37083 Göttingen\\
	Germany}
\email{wardetzky@math.uni-goettingen.de}
\thanks{The authors acknowledge financial support by the Deutsche Forschungsgemeinschaft (DFG, German Research Foundation) via project A02 of the Collaborative Research Center 1456. We sincerely thank an anonymous reviewer for the very constructive comments.}
\begin{document}
	
	\subjclass[2020]{39A12, 52C20, 35K08, 60J27}
	
	\keywords{Varadhan's formula, discrete heat kernel, isoradial graphs, short-time asymptotics}
	
	\begin{abstract}
		We consider the asymptotics of the discrete heat kernel on isoradial graphs for the case where the time and the edge lengths tend to zero simultaneously. Depending on the asymptotic ratio between time and edge lengths, we show that two different regimes arise: (i)~a Gaussian regime and (ii) a Poissonian regime, which resemble the short-time asymptotics of the heat kernel on (i) Euclidean spaces and (ii) graphs, respectively.
	\end{abstract}
	\vspace*{-1cm}
	\maketitle
	
	\section{Introduction}\label{sec:intro}
	The short-time asymptotics of the heat kernel on continuous and discrete domains differ substantially: 
	A classical result by Varadhan \cite{varadhan1967} shows that for the heat kernel $q$ on a Riemannian manifold $(M,g)$
	\begin{equation}\label{eq:cont-asymp1}
	\lim_{t\rightarrow 0} -2t\log q_t (x,y) = d_g^2(x,y)
	\end{equation}
	holds uniformly for all $x,y\in M$.
	On the other hand, for a weighted graph $(V,E)$ equipped with its graph Laplacian,
	the discrete heat kernel $p$ satisfies
	\begin{equation}\label{eq:cont-asymp2}
	\lim_{t\rightarrow 0} \frac{1}{\log t}\log p_t (x,y) = d_c(x,y) \ ,
	\end{equation}
	where $d_c$ is the combinatorial graph distance, see \cite{keller2016,steinerberger2018}.
	We show that both behaviors can occur for scaling limits of isoradial graphs if the time decreases simultaneously with the edge lengths of the graph.
	
	More precisely, let $(\Gamma_h)_{h>0}$ be a family of planar graphs embedded isoradially in $\mathbb{C}$, i.e., each face of $\Gamma_h$ is inscribed into a circle of diameter $h$ and the midpoints of the circles lie inside their corresponding face. Suppose that this family satisfies mild assumptions on the minimal edge lengths, cf. Assumptions \ref{ass:bap-primal} and \ref{ass:bap-dual}. Furthermore, let $d_h$ be the combinatorial graph distance on $\Gamma_h$, $p^h$ the discrete heat kernel defined with respect to the geometric graph Laplacian on isoradial graphs, see Definition \ref{def:laplacian} in Section \ref{sec:prelim}, and $\tau > 0$ a constant. 
	Our main Theorems \ref{thm:euclideanregime} and \ref{thm:graphregime} show that for $\beta < 1$ 
	\begin{equation}\label{eq:graph-asymp1}
	\lim_{h\rightarrow 0} -2 \left(h^\beta \tau\right) \log p^h_{h^\beta \tau} (x,y) = 
	\lvert x-y\rvert^2
	\end{equation}
	holds, while for $\beta > 1$ we obtain 
	\begin{equation}\label{eq:graph-asymp2}
	\frac{h}{\log h^{\beta-1}} \log p^{h}_{h^\beta \tau} (x,y) = hd_h (x,y) + o(1)
	\end{equation}
	as $h\to 0$. Notice that the limits in Equation \eqref{eq:graph-asymp1} and \eqref{eq:graph-asymp2} correspond to the short-time asymptotics in Equation \eqref{eq:cont-asymp1} and \eqref{eq:cont-asymp2}, respectively: While the edge lengths decrease faster than time in the case of $\beta < 1$ (leading to the Euclidean regime), the reverse situation for $\beta > 1$ yields the graph behavior. We do not treat the critical case $\beta = 1$ here, because it strongly depends on the underlying graphs. 
	
	The main tool to prove the Euclidean regime is a \emph{large deviation principle}, which is obtained by the fact that the geometric Laplacian 
	maps quadratic functions to constants on isoradial graphs, see Lemma~\ref{lem:second-order-approx}.
	In contrast, the proof for the case $\beta > 1$ does not require any properties of isoradial graphs and also holds in more general settings. 
	
	The existence of two different regimes similar to ours was previously observed in the physics literature for \emph{one-dimensional} continuous-time random walks \cite{barkai2020,wang2020}.
	
	Results on the short-time asymptotic of heat kernels are classical in potential analysis and available for numerous different settings, e.g. for Lipschitz manifolds \cite{norris1997}, fractals \cite{arous2000,kumagai1997} and infinite dimensional spaces \cite{zhang2000}. Heat kernels on graphs are studied in the textbooks \cite{barlow2017,keller2021,telcs2006}. In particular, we use Gaussian heat kernel estimates in the proof of Theorem \ref{thm:euclideanregime}, which have been established under various conditions in, e.g., \cite{delmotte1999,grigoryan2012,hebisch1993}. The heat kernel on a weighted graph is furthermore related to its spectrum \cite{guivarch1980,kotani1998} and its curvature~\cite{bauer2015,horn2019}.
	
	Isoradial graphs were introduced by Duffin in \cite{duffin1968} in order to develop discrete complex analysis and were later used for the rigorous study of models from statistcal physics and their scaling limits at criticality, see, e.g.,~\cite{boutillier2012,chelkak2012,duminil2018}. 
	It has been shown that many concepts of discrete complex analysis on isoradial graphs are consistent with their corresponding continuous versions, see e.g.~\cite{chelkak2011,kenyon2002}.
	
	\section{Preliminaries}\label{sec:prelim}
	\subsection{Isoradial graphs}
	Let $(\Gamma_h)_{h>0}$ be a family of infinite planar graphs embedded into $\mathbb{C}$ with straight edges. For $h>0$, let $V_h$, $E_h$, and $F_h$ denote the vertex set, the edge set, and the set of faces of $\Gamma_h$, respectively. We write $v\sim u$ to indicate that $\{v,u\}\in E_h$. Throughout this article, we consider isoradial graphs, i.e., each face $f\in F_h$ is inscribed into a circle of diameter $h$ and the midpoints of the circles lie inside their corresponding face, see Figure \ref{fig:isoradial} \emph{(left)}. 
	Under the widely used Assumptions \ref{ass:bap-primal} and \ref{ass:bap-dual}, see below, all isoradial graphs have bounded degree.
	
	The isoradial graph $\Gamma_h$ has an isoradial dual graph $\Gamma^*_h$ embedded into $\mathbb{C}$ with vertex set $V_h^* = \{m_f : f\in F_h\}$, where $m_f$ is the center of the circumcircle of the face $f\in F_h$. Two vertices $m_f,m_g\in V^*_h$ are connected if and only if the corresponding faces $f,g$ are adjacent.
	Notice that there is a one-to-one correspondence between faces of $\Gamma_h$ and vertices of $\Gamma^*_h$ and vice-versa.
	Moreover, $\Gamma^*_h$ is a plane graph, and for any edge $e\in E_h$, there exists a unique dual edge $e^* \in E^*_h$ orthogonal to $e$ connecting the circumcenters of the two faces with common edge $e$, see Figure \ref{fig:isoradial} \emph{(middle)}. Every isoradial graph induces a rhombic tiling of the Euclidean plane since the faces of the bipartite graph between the vertex sets $V_h$ and $V_h^*$ are rhombi with edge length $\frac{h}{2}$, see Figure~\ref{fig:isoradial}~\emph{(right)}.
	
	\begin{figure}
		\centering
		\includegraphics[width=\textwidth]{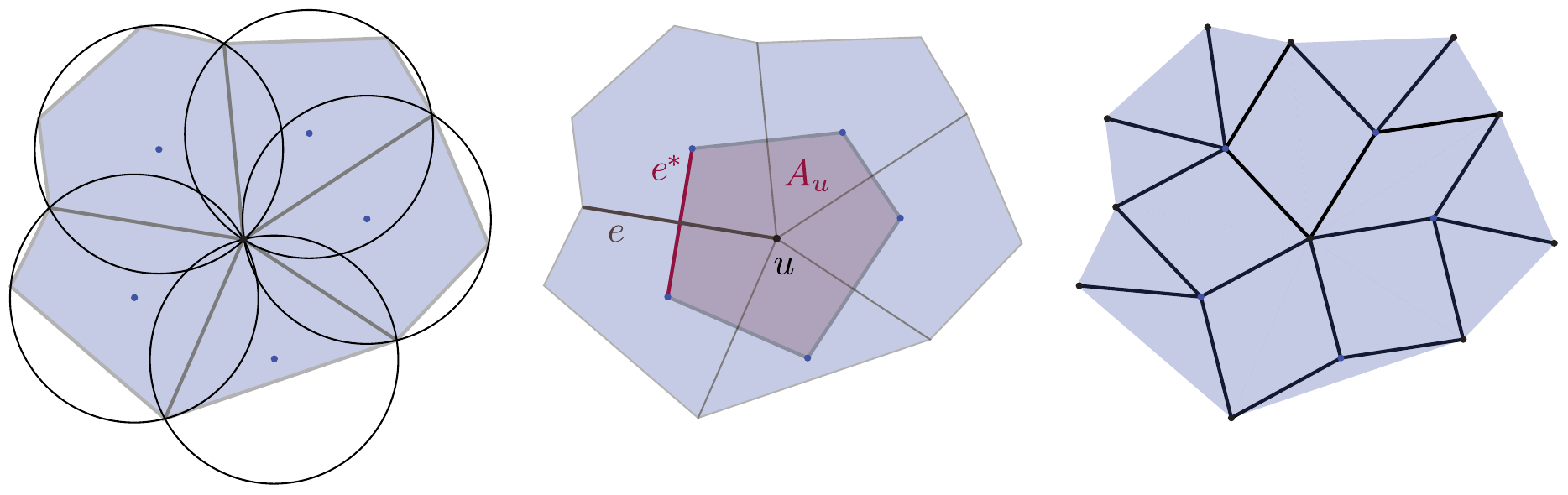}
		\caption{Vertex fan in an isoradial graph. \emph{Left:} All faces are inscribed into circles of equal diameter. \emph{Middle:} 
		Dual edge $e^*$ defined as the segment between midpoints of circumcircles of two adjacent polygons. The resulting polygon enclosed by dual edges is the dual face of $u$ with area~$A_u$. \emph{Right:} Resulting rhombic tiling of the plane.}
		\label{fig:isoradial}
	\end{figure}
	
	We frequently assume that $(\Gamma_h)_{h>0}$ satisfies the following properties:
	
	\begin{assumption}\label{ass:bap-primal}
		There exists a constant $c_p > 0$ such that for all $h>0$
		\begin{equation*}
		c_p h \leq \inf_{e\in E_h} \lvert e \rvert \ ,
		\end{equation*}
		where $\lvert \cdot \rvert$ denotes Euclidean length. 
	\end{assumption}
	
	\begin{assumption}\label{ass:bap-dual}
		There exists a constant $c_d > 0$ such that for all $h > 0$
		\begin{equation*}
		c_d h \leq \inf_{e^*\in E_h^*} \lvert e^*\rvert \ .
		\end{equation*}
	\end{assumption}
	
	Notice that Assumptions \ref{ass:bap-primal} and \ref{ass:bap-dual} combined are equivalent to the \emph{bounded-angle property} with constants independent of $h$, which is often used in the literature on isoradial graphs, see, e.g. \cite{grimmett2013}. The bounded-angle property is satisfied if the induced rhombi do not degenerate. Notice furthermore that the isoradiality of $\Gamma_h$ and $\Gamma^*_h$ already implies that
	\begin{equation}
	\label{edgelengthbound}
	\sup_{e\in E_h} \lvert e \rvert \leq h \; \text{and} \; \sup_{e^*\in E^*_h} \lvert e^* \rvert \leq h \ .
	\end{equation}
	Under Assumption \ref{ass:bap-primal} and \ref{ass:bap-dual}, the maximal degree of $\Gamma_h$ is bounded uniformly in $h$. In the following, we denote this bound by $M$.
	
	For $u,v\in V_h$, consider the set $\mathfrak{X}_h(u,v)$ of paths $\gamma = (q_0,\dots, q_{n(\gamma)})$ in $\Gamma_h$ with $q_0 = u$, $q_{n(\gamma)} = v$ and $q_i \sim q_{i+1}$ for $0\leq i < n(\gamma)$. We write $\{z,z'\}\in\gamma$, if there is an $i<n(\gamma )$ with $q_i = z$ and $q_{i+1} = z'$ and also $z\in\gamma$ if there is an $i\leq n(\gamma )$ with $q_i = z$.
	By using these paths we define two metrics on the graphs: Let
	\begin{equation*}
	l_h (u,v) = \inf_{\gamma\in\mathfrak{X}_h(u,v)}\sum_{e\in \gamma}\, \lvert e\rvert
	\end{equation*}
	denote the \emph{weighted shortest-path distance}, where the sum is taken over all edges connecting the vertices in the respective path. The \emph{combinatorial graph distance} is given by
	\begin{equation*}
	d_h (u,v) = \inf_{\gamma\in\mathfrak{X}_h(u,v)} n (\gamma) \ .
	\end{equation*}
	The next lemma shows that Assumption \ref{ass:bap-primal} implies that both the weighted shortest-path distance $l_h$ and the \emph{rescaled} combinatorial distance $h d_h$ are equivalent to the Euclidean metric:
	
	\begin{lemma}\label{lem:spanning-tree}
		There is a constant $\kappa \leq 1.998$ such that for all $h>0$ and $u,v\in V_h$
		\begin{equation}\label{eq:geometric-spanner}
		\lvert u-v\rvert \leq l_h(u,v) \quad \text{and} \quad l_h (u,v) \leq \kappa \lvert u-v\rvert \ .
		\end{equation}
		If additionally Assumption \ref{ass:bap-primal} holds, then
		\begin{equation}\label{eq:geometric-spanner^c}
		\lvert u-v\rvert \leq h d_h (u,v) \quad \text{and}\quad h d_h (u,v) \leq \frac{\kappa}{c_p} \lvert u-v\rvert
		\end{equation}
		for $h>0$ and $u,v\in V_h$, where $c_p$ is the constant in Assumption \ref{ass:bap-primal}.
	\end{lemma}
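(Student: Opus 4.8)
The plan is to separate the three elementary metric comparisons, which are immediate, from the one substantial ingredient: the spanning inequality $d_h(u,v)\le\kappa\,\lvert u-v\rvert$ with $\kappa\le 1.998$.

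\emph{The easy comparisons.} For any path $\gamma=(q_0,\dots,q_n)\in\mathfrak{X}_h(u,v)$ one has $\sum_{e\in\gamma}\lvert e\rvert=\sum_{i=0}^{n-1}\lvert q_{i+1}-q_i\rvert$, so the triangle inequality for the Euclidean norm gives $\sum_{e\in\gamma}\lvert e\rvert\ge\bigl\lvert\sum_{i}(q_{i+1}-q_i)\bigr\rvert=\lvert u-v\rvert$, while \eqref{edgelengthbound} gives $\sum_{e\in\gamma}\lvert e\rvert\le n(\gamma)\,h$. Taking the infimum over $\gamma$ yields $\lvert u-v\rvert\le d_h(u,v)\le h\,d^c_h(u,v)$, i.e.\ the left halves of both \eqref{eq:geometric-spanner} and \eqref{eq:geometric-spanner^c}. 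For the right half of \eqref{eq:geometric-spanner^c} I would invoke Assumption~\ref{ass:bap-primal}: since $\lvert e\rvert\ge c_p h$ for every edge, each $\gamma$ satisfies $n(\gamma)=\sum_{e\in\gamma}1\le(c_p h)^{-1}\sum_{e\in\gamma}\lvert e\rvert$, hence $d^c_h(u,v)\le(c_p h)^{-1}d_h(u,v)$, and combining with the right half of \eqref{eq:geometric-spanner} gives $h\,d^c_h(u,v)\le(\kappa/c_p)\lvert u-v\rvert$. So everything reduces to producing, for all $u,v\in V_h$, a path in $\Gamma_h$ of Euclidean length at most $1.998\,\lvert u-v\rvert$.

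\emph{The spanning inequality.} This is a \emph{geometric spanner} statement, and the precise value $1.998$ points to the theorem of Xia that the Delaunay triangulation of any planar point set has stretch factor strictly below $1.998$. The bridge I would use is the standard property of isoradial graphs that the open disk bounded by the circumscribed circle $C_f$ of a face $f$ contains no vertex of $V_h$ --- informally, the radial segment from the circumcenter $m_f\in\operatorname{int}(f)$ to any other vertex must cross an edge of $f$, and this, together with $\Gamma^*_h$ being isoradial, pushes that vertex out of the radius-$h/2$ disk. Granting this, triangulate every face of $\Gamma_h$ by non-crossing chords; the resulting triangulation $\mathcal T_h$ of the point set $V_h$ has every triangle inscribed in one of the empty circles $C_f$, hence is a Delaunay triangulation, and Xia's theorem supplies a $\mathcal T_h$-path from $u$ to $v$ of length at most $1.998\,\lvert u-v\rvert$.

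\emph{The main obstacle.} Whenever a face of $\Gamma_h$ has more than three sides, $\mathcal T_h$ has edges that are not edges of $\Gamma_h$, and a shortest $\mathcal T_h$-path may traverse these auxiliary chords; to obtain a genuine $\Gamma_h$-path one must reroute each such excursion along the boundary of the corresponding face. A single reroute --- replacing a chord of a cyclic polygon inscribed in a circle of diameter $h$ by the shorter of its two boundary arcs --- costs only the factor $\tfrac{\pi}{2}<1.998$, but composing that with Xia's bound afterwards would overshoot, so the rerouting has to be built into the spanner estimate rather than applied on top of it: concretely, I would rerun the greedy ``Delaunay-path'' construction (of Keil--Gutwin / Xia type) intrinsically on $\Gamma_h$, treating ``traverse a face once in the target direction'' as a single step, and check --- using the empty-circumcircle property to bound the detour per step --- that the accumulated length still stays below $1.998\,\lvert u-v\rvert$. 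Matching this sharp constant in the presence of large faces is the delicate point; everything else is routine, and the resulting $\kappa\le 1.998$ is in any case far from sharp for regular families, e.g.\ for the isoradially embedded square lattice the worst-case ratio is only $\sqrt2$.
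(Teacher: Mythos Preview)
Your treatment of the three elementary comparisons is correct and matches what the paper leaves implicit (the authors simply declare the upper bound on $d_h$ to be ``the only non-trivial part'').

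For the spanning inequality you correctly identify Xia's $1.998$ result as the key ingredient, but your route --- triangulate the faces, argue the result is Delaunay, apply Xia's Delaunay spanner bound, then undo the triangulation --- manufactures the very rerouting obstacle you are then unable to close. ``Rerun the greedy construction intrinsically on $\Gamma_h$'' is a plan, not a proof; carrying it out while retaining the constant $1.998$ would essentially mean redoing Xia's analysis in a more general setting, and you do not do this. As written, the proposal has a gap exactly where you flag it.

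The paper avoids this detour entirely. It invokes Xia's Theorem~1 --- which is formulated for an abstract \emph{chain of disks}, not specifically for circumdisks of Delaunay triangles --- directly on the circumdisks of the faces of $\Gamma_h$ crossed by the segment $L=[u,v]$. All these disks have the same radius $h/2$, so none contains another; consecutive faces along $L$ share an edge, and the endpoints of that edge are exactly the two intersection points of the corresponding circumcircles; three consecutive disks meet in at most one point. These are precisely Xia's hypotheses, and his theorem then yields $d_h(u,v)\le\kappa\,|u-v|$ with $\kappa<1.998$. Because the disk-chain formulation does not require the underlying faces to be triangles, no auxiliary chords are ever introduced and there is nothing to reroute; what appears as the ``main obstacle'' in your approach is an artifact of the unnecessary triangulation step.
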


	\begin{proof}
		The upper-bound on $l_h$ in \eqref{eq:geometric-spanner} is the only non-trivial part of this lemma. We can make use of Xia's main theorem in \cite{xia2013} to show this inequality:
		
		Fix $u,v\in V_h$. Let $L$ be the line segment from $u$ to $v$ and let $$\mathcal{F}=\{f\in F_h : f\cap L \neq \emptyset\}$$ be the subset of faces crossed by $L$. 
		The circumdisks of the faces in $\mathcal{F}$ induce a chain $(D_1, \dots, D_n)$ such that consecutive disks intersect, no disk contains any other disk, and three consecutive disks intersect in at most a single point. 
		In this situation, Theorem 1 in \cite{xia2013} yields  a constant $\kappa < 1.998$ such that
		\begin{equation*}
		l_h(u,v) \leq \kappa \lvert u-v\rvert \ ,
		\end{equation*}
		which completes the proof. \qedhere
	\end{proof}
	
	
	\subsection{Geometric Graph Laplacian}
	We define a Laplace operator on $\Gamma_h$, $h>0$, by considering the edge weights
	\begin{equation*}
	e = \{u,v\} \mapsto \omega_{uv} := \frac{\lvert e^*\rvert}{\lvert e\rvert}\ .
	\end{equation*}
	These weights are well-defined and non-negative. For later use we also define vertex weights by 
	\begin{equation}\label{eq:alternative-weights}
	m_u := \sum_{z\sim u} \omega_{uz}.
	\end{equation}
	\begin{definition}\label{def:laplacian}
		For any function $f: V_h\rightarrow\mathbb{C}$ defined on the vertices of $\Gamma_h$, we define the geometric graph Laplacian of $f$ as
		\begin{equation}\label{eq:laplacian}
		(\Delta_h f)(u) := \frac{1}{A_u}\sum_{v\sim u} \omega_{uv} (f(v) - f(u)) \ ,
		\end{equation}
		where $A_u$ is the area of the dual face of $u$, see Figure \ref{fig:isoradial} \emph{(middle)}.
	\end{definition}
	
	\begin{remark}
		Notice that $\Delta_h$ is negative semi-definite by our sign convention.
		Our notion of the Laplacian is identical to the one defined in \cite{chelkak2011,kenyon2002}. Moreover, if $\Gamma_h$ is a triangulation, the above Laplacian is (up to the area factor and a sign) identical to the so-called cotan operator, see \cite{macneal1949,pinkall1993}.
	\end{remark}
	
	The operator $\Delta_h$ is \emph{geometric}. 
	In particular, $\Delta_h$ maps affine functions to zero and quadratic functions to constants. More precisely it was proven in \cite{chelkak2011}, Lemma~2.2 that: 
	
	\begin{lemma}\label{lem:second-order-approx}
		Let $a,b,c \in \mathbb{C}$ be constants. Affine functions of the form $f(x+iy) = ax + by +c$ restricted to $\Gamma_h$ are in the kernel of $\Delta_h$. Moreover, for all functions $f(x+iy) = ax^2 + bxy +cy^2$ one has $\Delta_h f = 2(a+c)$.
	\end{lemma}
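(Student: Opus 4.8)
The plan is to exploit the \emph{rhombic structure} of isoradial graphs. Fix $u\in V_h$ and enumerate its neighbours $v_1,\dots,v_k$ in counterclockwise cyclic order; let $f_1,\dots,f_k$ be the faces incident to $u$, with $f_j$ bounded by the edges $\{u,v_j\}$ and $\{u,v_{j+1}\}$, and set $w_j:=m_{f_j}$ (indices mod $k$, so that $w_0=w_k$). Then $w_0,\dots,w_{k-1}$ are precisely the vertices of the face of $\Gamma_h^*$ dual to $u$, whose area is $A_u$. Since $u$ and $v_j$ both lie on the circumcircle of $f_{j-1}$ and on that of $f_j$, each of radius $h/2$, the quadrilateral $u\,w_{j-1}\,v_j\,w_j$ is a rhombus of side $h/2$. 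I would record the two resulting identities
\begin{equation*}
v_j-u=(w_{j-1}-u)+(w_j-u),\qquad \omega_{uv_j}(v_j-u)=-i\,(w_j-w_{j-1}),
\end{equation*}
the second one following because $v_j-u$ is orthogonal to the dual edge $w_j-w_{j-1}$ with $|v_j-u|=|e_j|$ and $|w_j-w_{j-1}|=|e_j^*|$, so that multiplying by $\omega_{uv_j}=|e_j^*|/|e_j|$ amounts to rotating $w_j-w_{j-1}$ by $90^\circ$; the sign of $i$ is pinned down by the counterclockwise convention.

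Next I would reduce the two claims to moment identities for the vectors $v_j-u$. For affine $f(x+iy)=ax+by+c$ one has $f(v_j)-f(u)=a\,\mathrm{Re}(v_j-u)+b\,\mathrm{Im}(v_j-u)$, hence $\Delta_h f(u)$ is a fixed multiple of $\sum_j\omega_{uv_j}(v_j-u)=-i\sum_j(w_j-w_{j-1})=0$, a telescoping sum; so $\Delta_h f\equiv0$. For $f(x+iy)=ax^2+bxy+cy^2$, write $v_j-u=(p_j,q_j)$ and expand $f(v_j)-f(u)$ around $u$; the part linear in $(p_j,q_j)$ is annihilated after summation by the affine case, so that $\sum_{v\sim u}\omega_{uv}(f(v)-f(u))=\sum_j\omega_{uv_j}(ap_j^2+bp_jq_j+cq_j^2)$, and it remains to show
\begin{equation*}
\sum_j\omega_{uv_j}p_j^2=\sum_j\omega_{uv_j}q_j^2=2A_u,\qquad\sum_j\omega_{uv_j}p_jq_j=0 .
\end{equation*}

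To finish, write $w_j-u=(s_j,t_j)$; the rhombus identities give $\omega_{uv_j}p_j=t_j-t_{j-1}$, $\omega_{uv_j}q_j=-(s_j-s_{j-1})$, and $p_j=s_{j-1}+s_j$, $q_j=t_{j-1}+t_j$. Then $\sum_j\omega_{uv_j}p_jq_j=\sum_j(t_j-t_{j-1})(t_{j-1}+t_j)=\sum_j(t_j^2-t_{j-1}^2)=0$, while $\sum_j\omega_{uv_j}p_j^2=\sum_j(t_j-t_{j-1})(s_{j-1}+s_j)=\sum_j(s_{j-1}t_j-s_jt_{j-1})$ is twice the signed area of the simple, positively oriented polygon $w_0w_1\cdots w_{k-1}$, i.e.\ $2A_u$, by the shoelace formula; the identity for $\sum_j\omega_{uv_j}q_j^2$ is analogous. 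Combining the three gives $\sum_{v\sim u}\omega_{uv}(f(v)-f(u))=2(a+c)A_u$, and dividing by $A_u$ yields $\Delta_h f=2(a+c)$. The same bookkeeping can be carried out in complex notation by checking $\Delta_h(z^2)(u)=A_u^{-1}\sum_j\omega_{uv_j}(v_j-u)^2=0$ and $\Delta_h(z\bar z)(u)=A_u^{-1}\sum_j\omega_{uv_j}|v_j-u|^2=4$ and then expanding $ax^2+bxy+cy^2$ in the basis $z^2,z\bar z,\bar z^2$. I expect the only delicate point to be the rhombus picture of the first step — in particular verifying that the counterclockwise enumeration of the neighbours of $u$ renders $w_0\cdots w_{k-1}$ a positively oriented simple polygon, so the shoelace sum comes out as $+2A_u$ rather than $-2A_u$; this is precisely where isoradiality (equal circumradii, circumcenters inside their faces) together with Assumptions \ref{ass:bap-primal}--\ref{ass:bap-dual} enters. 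This geometric statement is Lemma~2.2 of \cite{chelkak2011}.
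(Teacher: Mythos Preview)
Your argument is correct and is essentially the standard rhombic-lattice computation that underlies Lemma~2.2 of \cite{chelkak2011}. Note that the paper itself does not supply a proof of this lemma at all: it simply attributes the statement to \cite{chelkak2011}. So your write-up is strictly more informative than the paper on this point, and the route you take --- the diagonal decomposition $v_j-u=(w_{j-1}-u)+(w_j-u)$, the rotation identity $\omega_{uv_j}(v_j-u)=-i(w_j-w_{j-1})$, and the shoelace reduction --- is precisely the argument in the cited source.

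One small correction: the identity does \emph{not} require Assumptions~\ref{ass:bap-primal}--\ref{ass:bap-dual}. Isoradiality alone (all circumradii equal to $h/2$, each circumcenter inside its face) already forces the quadrilaterals $u\,w_{j-1}\,v_j\,w_j$ to be non-degenerate rhombi and the dual cell $w_0\cdots w_{k-1}$ to be a simple, positively oriented polygon; the bounded-angle assumptions enter elsewhere in the paper (for weight and area bounds) but are irrelevant here. Otherwise your bookkeeping --- in particular the telescoping of $\sum_j(t_j^2-t_{j-1}^2)$ and the identification of $\sum_j(s_{j-1}t_j-s_jt_{j-1})$ with $2A_u$ --- is clean and complete.
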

	
	In later sections, we will frequently use the following bounds on the edge weights 
	and dual areas that follow immediately from Assumptions \ref{ass:bap-primal} and \ref{ass:bap-dual}
	as well as from \eqref{edgelengthbound} and the uniform bound on the maximum degree.
	
	\begin{lemma}\label{lem:bounded-weights}
		Under Assumptions \ref{ass:bap-primal} and \ref{ass:bap-dual}, the edge weights are bounded by 
		\begin{equation*}
		c_d \leq \inf_{e\in E_h} \omega_{e} \leq \sup_{e\in E_h} \omega_{e} \leq c_p^{-1}\ ,
		\end{equation*}
		where the constants $c_p$ and $c_d$ are the same as in the assumptions and independent of $h$. 
		Moreover, there are constants $\kappa_1, \kappa_2 > 0$ independent of $h$ such that
		\begin{equation*}
		\kappa_1 h^{2} \leq \inf_{u\in V_h} A_u \leq \sup_{u\in V_h} A_u \leq \kappa_2 h^2\ .
		\end{equation*}
	\end{lemma}
	
	\subsection{Discrete Heat Kernels}\label{ssec:discrete-hk}
	Let $X^h = (X^h_t)_{t\geq 0}$ be the time-continuous random walk on $\Gamma_h$ with infinitesimal generator $\frac12 \Delta_h$. The process stays in a vertex $u\in V_h$ for a time that is exponentially distributed with rate $$\lambda (u) := 
	\frac{m_u}{2A_u}$$ 
	and moves to an adjacent vertex $v\sim u$ with probability $m_u^{-1} \omega_{uv}$.
	Define the 
	measure $\mathcal{\nu}$ on $V_h$ by $\nu(\{u\}) = A_u$ for any $u\in V_h$. Then, $\nu$ is a stationary measure of $X^h$ since $\Delta_h$ is symmetric with respect to $L^2(V_h,\nu)$.
	
	The \emph{discrete heat kernel} $p^h = (p^h_{t})_{t\geq 0}$ on $\Gamma_h$ is the transition probability density of $X^h$ with respect to $\nu$, i.e.,
		\begin{equation*}
		p_t^h (u,v) = \frac{\mathbb{P}_u[X_t^h = v]}{A_v}
		\end{equation*}
		for any $t\geq 0$ and $u,v\in V_h$, where $\mathbb{P}_u$ denotes the law of $X^h$ with initial condition $X_0^h = u$. It solves the discrete heat equation
	\begin{equation*}\label{eq:heat-equation}
	\begin{split}
	\partial_t p_t^h (u,\cdot ) &= \frac{1}{2}\Delta_h p_t^h (u,\cdot ) \ , \quad t>0\text{, } \\
	p_0^h (u,v) &= \delta_u (v)
	\end{split}
	\end{equation*}
	for all $u,v\in V_h$, where $\delta_u (v) = A_u^{-1}$ if $u=v$ and $0$ otherwise. 
	Processes like $X^h$ with vertex dependent waiting times are also known as \emph{variable speed random walks} in contrast to \emph{constant speed random walks}, see, e.g. \cite{barlow2010}. 
	In the following, we may extend $p^h_t$ implicitly to $\mathbb{C}\times\mathbb{C}$ by $$p^h_t (x,y) := p^h_t (\pi_{V_h} (x), \pi_{V_h} (y))\quad \text{for all } x,y\in\mathbb{C}\ ,$$ 
	where $\pi_{V_h}$ denotes the projection to the closest vertex with respect to the Euclidean distance.
	Likewise, we write $\mathbb{P}_x := \mathbb{P}_{\pi_{V_h} (x)}$ for any $x\in\mathbb{C}$.
	If the nearest vertex is not unique, we choose to map to one of them, in order to make $\pi_{V_h}$ well defined.
	
	\section{Euclidean Limit}
	
	\begin{theorem}\label{thm:euclideanregime}
		Let $(\Gamma_h)_{h>0}$ be a family of isoradial graphs that satisfies Assumptions \ref{ass:bap-primal} and \ref{ass:bap-dual}. For all $\beta \in (0,1)$ and $\tau >0$,
		\begin{equation*}
		\lim_{h\rightarrow 0} - 2 \left(h^\beta \tau \right) \log p_{h^\beta \tau}^h (x,y) = \lvert x-y\rvert^2
		\end{equation*}
		uniformly on compact sets in $\mathbb{C}\times \mathbb{C}$.
	\end{theorem}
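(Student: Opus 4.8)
The plan is to argue probabilistically, treating $p^h$ as (up to bounded factors) the transition probability of the random walk $X^h$ with generator $\tfrac12\Delta_h$. By reversibility with respect to the areas $(A_u)$ one has $p^h_s(x,y)=\tfrac{A_{\pi_{V_h}(x)}}{A_{\pi_{V_h}(y)}}\,\mathbb{P}_{\pi_{V_h}(x)}(X^h_s=\pi_{V_h}(y))$, and by Lemma~\ref{lem:bounded-weights} the prefactor is bounded above and below by constants, hence disappears after multiplying $\log p^h$ by $h^\beta$ and letting $h\to 0$. The structural input that makes the Euclidean regime appear is the following consequence of Lemma~\ref{lem:second-order-approx}: for $\xi\in\mathbb{C}$ and $u\in V_h$,
\begin{equation*}
e^{-\langle\xi,u\rangle}\,\tfrac12\Delta_h e^{\langle\xi,\cdot\rangle}(u) = \tfrac12|\xi|^2 + R_h(\xi,u),\qquad |R_h(\xi,u)|\le C\,|\xi|^3\,h\,e^{|\xi| h},
\end{equation*}
since the zeroth- and first-order Taylor terms of $v\mapsto e^{\langle\xi,v-u\rangle}$ lie in $\ker\Delta_h$ and the quadratic term is sent to exactly $|\xi|^2$ by Lemma~\ref{lem:second-order-approx}, while the cubic remainder is controlled using $|v-u|\le h$ and the uniform bounds of Lemma~\ref{lem:bounded-weights}. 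Throughout the scaling is $\xi=\tfrac1t(y-x)h^{-\beta}$, so that $\langle\xi,y-x\rangle=\tfrac1t|x-y|^2 h^{-\beta}$ and $h^{\beta}\cdot h^{\beta}t\,\|R_h(\xi,\cdot)\|_\infty=O(h^{1-\beta})\to 0$.

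\emph{Upper bound.} For fixed $\xi$ the exponential martingale
\begin{equation*}
M^h_s = \exp\!\left(\langle\xi,X^h_s-X^h_0\rangle - \int_0^s e^{-\langle\xi,X^h_r\rangle}\,\tfrac12\Delta_h e^{\langle\xi,\cdot\rangle}(X^h_r)\,dr\right)
\end{equation*}
has mean $1$ (jump rates are $O(h^{-2})$, so there are no integrability issues on $[0,h^\beta t]$), which gives $\mathbb{E}_x[e^{\langle\xi,X^h_{h^\beta t}-X^h_0\rangle}]\le \exp(\tfrac12|\xi|^2 h^\beta t + h^\beta t\,\|R_h\|_\infty)$ by the displayed estimate. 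Applying Markov's inequality on the event $\{X^h_{h^\beta t}=\pi_{V_h}(y)\}$ with the above $\xi$, and noting $h^\beta\log(\mathrm{poly}(h^{-1}))\to 0$, the exponent becomes $-\tfrac1t|x-y|^2+\tfrac1{2t}|x-y|^2+o(1)$, so $\limsup_{h\to 0} h^\beta\log p^h_{h^\beta t}(x,y)\le -\tfrac1{2t}|x-y|^2$; all error terms depend only on $t$, the structural constants, and an upper bound for $|x-y|$, which yields uniformity on compacts.

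\emph{Lower bound.} Let $\mathbb{Q}$ be the change of measure with $d\mathbb{Q}/d\mathbb{P}|_{\mathcal{F}_{h^\beta t}}=M^h_{h^\beta t}$; under $\mathbb{Q}$ the walk $X^h$ is again a continuous-time Markov chain, with jump rates $\tfrac{\omega_{uv}}{2A_u}e^{\langle\xi,v-u\rangle}$, which are still uniformly elliptic because $e^{\langle\xi,v-u\rangle}=e^{O(h^{1-\beta})}\to 1$; a computation using Lemma~\ref{lem:second-order-approx} shows that the $\mathbb{Q}$-walk has mean displacement $(y-x)+o(\sqrt{h^\beta t})$ and variance $O(h^\beta t)$ over $[0,h^\beta t]$. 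Since on $\{X^h_{h^\beta t}=\pi_{V_h}(y)\}$ the inverse density $(M^h_{h^\beta t})^{-1}$ is bounded below by $\exp(-\tfrac1{2t}|x-y|^2 h^{-\beta}-o(h^{-\beta}))$, we obtain
\begin{equation*}
p^h_{h^\beta t}(x,y)\ \ge\ \exp\!\left(-\tfrac1{2t}|x-y|^2 h^{-\beta}-o(h^{-\beta})\right)\mathbb{Q}\!\left(X^h_{h^\beta t}=\pi_{V_h}(y)\right),
\end{equation*}
so it suffices to show $h^\beta\log\mathbb{Q}(X^h_{h^\beta t}=\pi_{V_h}(y))\to 0$. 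By Chebyshev the $\mathbb{Q}$-walk lies in $B(y,R\sqrt{h^\beta t})$ with probability at least $\tfrac12$ for $R$ large; this ball contains $O(h^{\beta-2})$ vertices, so by the pigeonhole principle $\mathbb{Q}(X^h_{h^\beta t}=v^\star)\gtrsim h^{2-\beta}$ for some $v^\star$ at distance $O(h^{\beta/2})$ from $y$. A parabolic Harnack inequality for the $\mathbb{Q}$-heat kernel, with constants uniform in $h$ because $(\Gamma_h)$ satisfies uniform volume doubling and a uniform Poincaré inequality (from Lemma~\ref{lem:bounded-weights} and the uniform equivalence of $d_h$ and $hd^c_h$ with the Euclidean metric in Lemma~\ref{lem:spanning-tree}), then transfers this lower bound to $\pi_{V_h}(y)$ at a comparable time $(1+o(1))h^\beta t$. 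Re-parametrizing $t$ and sending the time perturbation to zero yields $\liminf_{h\to 0} h^\beta\log p^h_{h^\beta t}(x,y)\ge -\tfrac1{2t}|x-y|^2$, again uniformly on compacts; combining the two bounds proves the theorem.

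\emph{Main obstacle.} The upper bound is soft. The delicate point is the lower bound, and within it the passage from ``the tilted walk is close to $y$'' to ``the walk is at the single vertex $\pi_{V_h}(y)$''. A path-space large deviation principle for $(X^h_{h^\beta\,\cdot})$ with rate function $\tfrac12\int_0^t|\dot\gamma|^2$ --- whose only nontrivial ingredient is the identification of the limiting Hamiltonian $\tfrac12|p|^2$ via Lemma~\ref{lem:second-order-approx}, together with exponential tightness from the martingale estimates --- delivers both the upper bound and a lower bound for open neighbourhoods of $y$, but not for a point; closing this last gap requires a \emph{local} heat-kernel estimate (the uniform parabolic Harnack inequality, or an equivalent near-diagonal lower bound), and establishing its constants uniformly over the family $(\Gamma_h)_{h>0}$ is where the main technical work lies.
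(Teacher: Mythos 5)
Your architecture parallels the paper's: an exponential-scale estimate for the probability of being \emph{near} $y$ (you get it by explicit Cram\'er--Chernoff tilting with $\xi=\tfrac1t(y-x)h^{-\beta}$, using Lemma~\ref{lem:second-order-approx} to identify the Hamiltonian $\tfrac12|\xi|^2$ up to an error $O(|\xi|^3he^{|\xi|h})$; the paper instead derives a path-space LDP via Puhalskii's semimartingale criterion, Proposition~\ref{prop:ldp} and Corollary~\ref{cor:ldp}), followed by a local estimate to pass from a neighbourhood of $y$ to the single vertex $\pi_{V_h}(y)$. Your upper bound is correct and arguably more elementary than the paper's; the reversibility factor $A_u/A_v$ is handled correctly, and the error bookkeeping works exactly because $\beta<1$. (Minor imprecision: the tilted drift error is $O(|\xi|^2h)\cdot h^\beta t=O(h^{1-\beta})$, which is not $o(\sqrt{h^\beta t})$ once $\beta\ge 2/3$; this is harmless since all you need is that the tilted walk lands within $o(1)$ of $y$, but the radius in your Chebyshev/pigeonhole step should be $C(h^{\beta/2}+h^{1-\beta})$ rather than $R\sqrt{h^\beta t}$.)

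The genuine gap is the transfer step. You invoke a parabolic Harnack inequality \emph{for the $\mathbb{Q}$-heat kernel} "with constants uniform in $h$ because $(\Gamma_h)$ satisfies uniform volume doubling and a uniform Poincar\'e inequality". Those properties concern the untilted weights; the $\mathbb{Q}$-walk is the reversible walk with conductances $\tfrac{\omega_{uv}}2e^{\langle\xi,u+v\rangle}$ and measure proportional to $A_ue^{2\langle\xi,u\rangle}$, and with $|\xi|\sim h^{-\beta}$ these vary by a factor $e^{c|\xi|r}\to\infty$ across any ball of the radius $r\gtrsim h^{\beta/2}$ over which you must move. So volume doubling and the Poincar\'e inequality fail uniformly for the tilted weights at the relevant scale, and a PHI with $h$-uniform constants for $\mathbb{Q}$ does not follow from the ingredients you cite; as justified, this step collapses. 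The repair is to undo the tilt before localizing: on $\{X^h_{h^\beta t}=v^\star\}$ the density $M^h_{h^\beta t}$ is deterministic up to $e^{o(h^{-\beta})}$, so your pigeonhole bound already gives $p^h_{h^\beta t}(x,v^\star)\ge\exp(-\tfrac1{2t}|x-y|^2h^{-\beta}-o(h^{-\beta}))$ for the \emph{original} kernel, and then Chapman--Kolmogorov together with a uniform near-diagonal lower bound for the original walk over the distance $|v^\star-\pi_{V_h}(y)|=o(1)$ and an extra time $\varepsilon h^\beta t$ costs only $\exp(-Co(1)h^{-\beta}/\varepsilon)$, negligible at speed $h^\beta$. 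That near-diagonal bound is precisely the paper's Lemma~\ref{lem:uniform-lower-bound}, obtained from Delmotte's Gaussian bounds after establishing UVG and PI in Lemma~\ref{lem:poincare}; note that this PI is \emph{not} an immediate consequence of Lemmas~\ref{lem:bounded-weights} and~\ref{lem:spanning-tree} (the paper needs a path-selection and line-crossing argument following Angel et al.), and Delmotte's result is applied after a time change to the $m_u$-normalized Laplacian. You correctly flag this local estimate as the main technical work, but in your write-up it is both unproven and attached to the wrong (tilted) operator.
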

	
	Let $X^h=(X^h_t)_{t\geq 0}$ denote the continuous-time random walk with generator $\frac{1}{2} \Delta_h$ from the previous section. 
	In order to prove Theorem \ref{thm:euclideanregime}, we first quantify the probabilities $\mathbb{P}[X^h_{h^\beta t} \in A]$ for measurable sets $A\subset\mathbb{C}$ by proving a \emph{large deviation principle} (LDP) in Section~\ref{ssec:ldp}. Using this LDP and a Gaussian lower bound for the heat kernel, we conclude the proof of Theorem~\ref{thm:euclideanregime} in Section \ref{ssec:densities}.
	
	\subsection{Large deviation principle}\label{ssec:ldp}
	
	Without an additional scaling factor of $h^{-\frac{\beta}{2}}$, the probability $\mathbb{P}_{x}[X^h_{h^\beta t} \in A]$ converges to $0$ as $h\rightarrow 0$ for any measurable $A\subset \mathbb{C}$, $x\in\mathbb{C}$, $\beta > 0$ and fixed $t>0$ if the point $x$ has positive distance to~$A$.  
	Large deviations theory asserts that the respective convergence is exponentially fast, and provides the rate of this exponential decay:
	A family of Borel probability measures $(\mu_\varepsilon)_{\varepsilon > 0}$ on a metric space $(E,d)$ satisfies a large deviation principle (LDP) with rate function $I: E\rightarrow [0,\infty ]$ and speed $(a_\varepsilon)_{\varepsilon > 0}$ if $I$ is lower semi-continuous and not equal to infinity everywhere, $a_\varepsilon\to 0$ as $\varepsilon\to 0$, and
	\begin{equation}\label{eq:ldp-bounds}
	-\inf_{y\in \mathring{A}} I(y) \leq \liminf_{\varepsilon\rightarrow 0} a_\varepsilon \log \mu_\varepsilon (A) \leq \limsup_{\varepsilon\rightarrow 0} a_\varepsilon \log \mu_\varepsilon (A) \leq -\inf_{y\in\overline{A}} I(y)
	\end{equation}
	for all measurable sets $A\subset E$.
	In particular, for $I$-continuous sets, i.e., sets that satisfy $\inf_{y\in \mathring{A}} I(y) = \inf_{y\in\overline{A}} I(y)$, we have
	\begin{equation*}
	\lim_{\varepsilon\rightarrow 0} a_\varepsilon \log \mu_\varepsilon (A) = -\inf_{y\in A} I(y) \ ,
	\end{equation*}
	which is reminiscent of the asymptotics in Theorem \ref{thm:euclideanregime} when expressed in terms of densities. 
	For the Wiener measure and path space measures of simple random walks, large deviation principles are given by Schilder's and Mogulskii's theorems, respectively, see, e.g.~\cite{dembo2010}. Since our setting deviates from simple random walks, we are concerned with proving an LDP for the measures induced by $\left(X^h_{h^\beta \cdot}\right)_{h>0}$ on the space of càdlàg functions, i.e., right-continuous with left limits, equipped with the locally uniform topology.
	
	We introduce some additional notation: For any square integrable martingales $Z$ and $\widetilde{Z}$ with respect to a filtration $(\mathcal{F}_t)_{t\geq 0}$ we denote the \emph{predictable} quadratic covariation by $\langle Z, \widetilde{Z}\rangle$, i.e., the unique predictable and increasing process such that $Z\widetilde{Z} - \langle Z, \widetilde{Z}\rangle$ is a local martingale wrt.\ $(\mathcal{F}_t)_{t\geq 0}$. The predictable quadratic variation of $Z$ is then given as $\langle Z \rangle := \langle Z, Z\rangle$. Existence and uniqueness of the predictable quadratic variation can be proven by using the Doob--Meyer decomposition theorem.
	Furthermore, we write $x_{t-} := \lim_{s\uparrow t} x_s$ 
	for $t>0$ and a càdlàg path $x: [0,\infty) \rightarrow \mathbb{C}$. By convention, $x_{0-} = x_0$.
	Finally, with $X^h$ defined as above, let
	$$(M_t^{\beta, h})_{t\geq 0} := \left(h^{-\frac{\beta}{2}} X_{h^\beta t}^h\right)_{t\geq 0}\quad \text{with } M_0^{\beta, h} = \pi_{V_h} (0)\ ,$$
	where $\pi_{V_h}$ denotes the projection to the closest vertex, see Section \ref{ssec:discrete-hk}.
	Denote the real and imaginary parts of this process by $M^{\beta, h, (1)}$ and $M^{\beta, h, (2)}$, respectively.
	
	\begin{lemma}\label{lem:quad-var}
		For $\beta \geq 0$, $(M^{\beta, h}_t)_{t\geq 0}$ is a martingale with respect to the natural filtration,
		\begin{equation*}
		\langle M^{\beta, h, (j)}\rangle_t = t \quad \text{and} \quad \langle M^{\beta, h, (1)},M^{\beta, h, (2)} \rangle_t = 0
		\end{equation*}
		for $j\in \{1,2\}$ and $h>0$.
	\end{lemma}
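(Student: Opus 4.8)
The plan is to deduce the lemma from Dynkin's formula for the continuous-time Markov chain $X^h$, with the only structural input being Lemma~\ref{lem:second-order-approx}: $\Delta_h$ annihilates the coordinate functions $f_1(x+iy)=x$ and $f_2(x+iy)=y$, and sends $f_1^2$, $f_1 f_2$, $f_2^2$ to the constants $2$, $0$, $2$. As a preliminary step I would check that $X^h$ is non-explosive and has finite moments of every order on bounded time intervals: by Lemma~\ref{lem:bounded-weights} the holding rate $\lambda(u)=m_u/(2A_u)$ is bounded above by a finite constant $\lambda_{\max}=\lambda_{\max}(h)$, so the number $N_t$ of jumps of $X^h$ on $[0,t]$ is stochastically dominated by a $\mathrm{Poisson}(\lambda_{\max}t)$ variable; since every edge of $\Gamma_h$ has length at most $h$ by~\eqref{edgelengthbound}, $|X^h_t-X^h_0|\le hN_t$, and hence $\sup_{s\le t}\mathbb{E}\bigl[|X^h_s|^k\bigr]<\infty$ for every $k$. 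In particular $\mathrm{Re}(X^h)$ and $\mathrm{Im}(X^h)$ are square integrable on bounded intervals.

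Next I would apply Dynkin's formula to $f_1$ and $f_2$. Because these functions are unbounded, the formula is first established on the stochastic intervals $[0,\tau_n]$ with $\tau_n=\inf\{t:|X^h_t|\ge n\}$ and then extended to $[0,\infty)$ by dominated convergence using the moment bounds above; since $\Delta_h f_j=0$ this shows that $f_1(X^h)$ and $f_2(X^h)$ are square-integrable martingales for the natural filtration of $X^h$. As $t\mapsto h^\beta t$ is a deterministic time change and $h^{-\beta/2}$ a deterministic scalar, the natural filtration of $M^{\beta,h}$ is $\mathcal{F}^{M^{\beta,h}}_t=\mathcal{F}^{X^h}_{h^\beta t}$, and $M^{\beta,h,(j)}_t=h^{-\beta/2}f_j(X^h_{h^\beta t})$ — hence also $M^{\beta,h}$ itself — inherits the martingale property, which settles the first assertion.

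For the predictable covariations I would use that, for the jump generator $\mathcal{L}=\tfrac12\Delta_h$, the carré du champ operator $\Gamma(f,g)=\mathcal{L}(fg)-f\,\mathcal{L}g-g\,\mathcal{L}f=\tfrac12\bigl(\Delta_h(fg)-f\,\Delta_h g-g\,\Delta_h f\bigr)$ satisfies $\langle f(X^h),g(X^h)\rangle_t=\int_0^t\Gamma(f,g)(X^h_s)\,ds$ (again via the same localization). By Lemma~\ref{lem:second-order-approx}, $\Delta_h f_1=0$ and $\Delta_h(f_1^2)=\Delta_h(x^2)=2$, so $\Gamma(f_1,f_1)\equiv1$; symmetrically $\Gamma(f_2,f_2)\equiv1$, while $\Delta_h(f_1f_2)=\Delta_h(xy)=0$ gives $\Gamma(f_1,f_2)\equiv0$. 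Hence $\langle f_j(X^h)\rangle_t=t$ and $\langle f_1(X^h),f_2(X^h)\rangle_t=0$. Finally, under a deterministic time change the predictable covariation is simply evaluated at $h^\beta t$, while the constant factor $h^{-\beta/2}$ multiplies it by $h^{-\beta}$; by bilinearity this gives $\langle M^{\beta,h,(j)}\rangle_t=h^{-\beta}(h^\beta t)=t$ and $\langle M^{\beta,h,(1)},M^{\beta,h,(2)}\rangle_t=0$, as claimed.

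The one genuinely delicate point is the unboundedness of $f_1,f_2$ on the infinite graph $\Gamma_h$: Dynkin's formula and the carré du champ identity must be obtained by localization, together with a check that the limiting expressions are integrable. This is precisely where the $\mathrm{Poisson}$ domination of $N_t$, i.e.\ the uniform bounds of Lemma~\ref{lem:bounded-weights}, enters; everything else is bookkeeping once Lemma~\ref{lem:second-order-approx} supplies the action of $\Delta_h$ on quadratics.
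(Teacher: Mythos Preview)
Your proposal is correct and follows essentially the same route as the paper: both arguments reduce to the Dynkin\,/\,martingale-problem identity for the generator $\tfrac12\Delta_h$, localize via $\tau_k=\inf\{t:|X^h_t|\ge k\}$ to cope with the unbounded coordinate functions, invoke Lemma~\ref{lem:second-order-approx} to evaluate $\Delta_h$ on $x$, $y$, $x^2$, $xy$, $y^2$, and finish by the deterministic time-and-space rescaling. Your packaging via the carr\'e du champ $\Gamma(f,g)=\tfrac12\bigl(\Delta_h(fg)-f\Delta_h g-g\Delta_h f\bigr)$ and the explicit Poisson domination of the jump count are clean touches, but they do not change the substance of the argument.
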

	
	\begin{proof}
		For any vertex $u\in V_h$, let $T_u$ be an exponential distributed random variable with rate
		\begin{equation*}
		\lambda (u) = \frac{1}{2A_u} \sum_{z\sim u} \omega_{uz}\ ,
		\end{equation*}
		describing the time $M^{\beta,h}$ spends in $u$. Moreover, let $\xi_u = (\xi_u^{(1)}, \xi_u^{(2)})$ be the increment of the process when leaving $u$ and recall that $m_u = \sum_{z\sim u} \omega_{uz}$ is its vertex weight. By definition of $X^h$, 
		\begin{equation*}
		\mathbb{P}[\xi_u = v-u] = \frac{\omega_{uv}}{m_u}\quad\text{for } v\sim u\ .
		\end{equation*}
		Hence, 
		\begin{equation*}
		\mathbb{E}[\xi_u] = \sum_{v\sim u} (v-u)\, \mathbb{P}[\xi_u = v-u] = \frac{A_u}{m_u} \Delta_h (\operatorname{Id}-u) 
		\ .
		\end{equation*}
		Likewise,
		\begin{equation*}
		\begin{split}
		\mathbb{E}\left[(\xi_u^{(1)})^2\right] &= \frac{A_u}{m_u} \Delta_h (\operatorname{Re}(\operatorname{Id}-u))^2 \ ,  \\
		\mathbb{E}\left[(\xi_u^{(2)})^2\right] &= \frac{A_u}{m_u} \Delta_h (\operatorname{Im}(\operatorname{Id}-u))^2 \ ,  \\
		\mathbb{E}\left[\xi_u^{(1)}\xi_u^{(2)}\right] &= \frac{A_u}{m_u} \Delta_h \left[ (\operatorname{Re}(\operatorname{Id}-u))(\operatorname{Im}(\operatorname{Id}-u))\right],\ 
		\end{split}
		\end{equation*}
		so Lemma \ref{lem:second-order-approx} yields
		\begin{equation}\label{eq:increments}
		\mathbb{E} \left[\xi_u^{(j)}\right] = 0 \ , \qquad \mathbb{E}\mleft[(\xi_u^{(j)})^2\mright] = \frac{2A_u}{m_u} = \frac{1}{\lambda (u)}\quad \text{and}\quad \mathbb{E}\mleft[\xi_u^{(1)} \xi_u^{(2)}\mright] = 0 \ 
		\end{equation}
		for $j\in\{1,2\}$ and $h>0$. The first equality in \eqref{eq:increments} shows that $(M^{\beta, h}_t)_{t\geq 0}$ is a martingale. 
		In order to prove the remaining part of the lemma, we consider the infinitesimal generator of the process $X^{h}$ given by
		\begin{equation*}
		Gf(u) = \frac{1}{2}(\Delta_h f)(u) = \frac{\lambda(u) A_u}{m_u} (\Delta_h f) (u)
		\end{equation*}
		for bounded, measurable functions $f: V_h \rightarrow \mathbb{R}$.
		Define the stopping times $\tau_k := \inf\{ t\geq 0 : \lvert X^h_t \rvert \geq k\}$, $k\in\mathbb{N}$ and denote the stopped process by $X^{h,\tau_k}$.
		Classical theorems on the martingale problem (\cite{ethier1986}, Proposition 1.7, Ch. 4) show that
		\begin{equation*}
		\left(f(X^h_t) - \int_0^t Gf(X^h_s)\,\text{d}s\right)_{t\geq 0}
		\end{equation*}
		is a martingale with respect to the natural filtration of $X^h$ for all bounded functions $f:\mathbb{C}\rightarrow \mathbb{R}$.
		In particular, the process
		\begin{equation*}
		\left(X^{h,\tau_k,(1)}_\cdot\right)^2 - \int_{0}^{\cdot \wedge \tau_k} \frac{\lambda(X_s^h)}{m_{X_s^h}} \sum_{X_s^h + \xi \sim X_s^h} \omega_{X_s^h, \xi} \left((\operatorname{Re}(X_s^h + \xi))^2 - (\operatorname{Re} X_s^h)^2\right)\text{d}s
		\end{equation*}
		is a martingale with respect to the natural filtration of $X^h$, where $\omega_{u, \xi} := \omega_{u(u+\xi)}$ and $t_1 \wedge t_2 := \min\{t_1, t_2\}$. Here, we use that $z\mapsto (\operatorname{Re} z)^2$ is bounded on the compact set $\{z\in \mathbb{C} : \lvert z \rvert \leq k\}$. By the uniqueness of the predictable quadratic variation,
		\begin{equation*}
		\begin{split}
		\langle X^{h,\tau_k, (1)}\rangle_t &= \int_{0}^{t \wedge \tau_k} \frac{\lambda(X_s^h)}{m_{X_s^h}} \sum_{X_s^h + \xi \sim X_s^h} \omega_{X_s^h, \xi} \left((\operatorname{Re}(X_s^h + \xi))^2 - (\operatorname{Re} X_s^h)^2\right)\text{d}s \\
		&= \int_{0}^{t \wedge \tau_k} \frac{\lambda(X_s^h)}{m_{X_s^h}} \sum_{X_s^h + \xi \sim X_s^h} \omega_{X_s^h, \xi} \left((\operatorname{Re}\xi)^2 + 2 (\operatorname{Re} X_s^h) (\operatorname{Re} \xi) \right)\text{d}s \\
		&= \int_0^{t \wedge \tau_k} \lambda (X_s^h)\, \mathbb{E}\mleft[\mleft(\xi_{X_s^h}^{(1)}\mright)^2\mright] \text{d}s = \int_{0}^{t \wedge \tau_k} \lambda(X_s^h) \frac{2 A_{X_s^h}}{m_{X_s^h}}\, \text{d}s = t\wedge \tau_k
		\end{split}
		\end{equation*}
		for all $k\in\mathbb{N}$ and $t\geq 0$, where the last equality follows from \eqref{eq:increments}. Likewise, one can show that  $\langle X^{h,\tau_k, (2)}\rangle_t = t\wedge \tau_k$.
		Since 
		\begin{equation*}
		\lim_{k\rightarrow\infty} \mathbb{P}[\tau_k < t] = 0
		\end{equation*}
		for all $t\geq 0$,
		\begin{equation*}
		\langle X^{h,(j)} \rangle_t = \lim_{k\rightarrow\infty} \langle X^{h,(j)} \rangle_{t\wedge \tau_k} = \lim_{k\rightarrow\infty} \langle X^{h,\tau_k,(j)} \rangle_{t} = \lim_{k\rightarrow\infty} t \wedge \tau_k = t \ ,
		\end{equation*}
		and therefore
		\begin{equation*}
		\langle M^{\beta, h, (j)}\rangle_t = h^{-\beta} \langle X^{h,(j)}_{h^\beta \cdot}\rangle_t = t \ .
		\end{equation*}
		The same arguments  prove $\langle M^{\beta, h, (1)},M^{\beta, h, (2)} \rangle_t = 0$ for all $h>0$ and $t\geq 0$.
	\end{proof}
	Lemma \ref{lem:quad-var} immediately implies a functional central limit theorem for $M^{\beta,h}$, which holds for all $\beta \in [0,2)$:
	\begin{corollary}\label{cor:clt}
		Let $B$ be the Brownian motion on $\mathbb{C}$ with $B_0 = 0$. Then, for $0 \leq \beta < 2$
		\begin{equation*}
		M^{\beta, h} \rightarrow B \ ,
		\end{equation*}
		in distribution as $h\rightarrow 0$.
	\end{corollary}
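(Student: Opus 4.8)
\section*{Proof proposal}

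The plan is to deduce the statement from the martingale central limit theorem (see, e.g., \cite{ethier1986}, Ch.~7), applied to the $\mathbb{R}^2$-valued martingales $M^{\beta,h} = (M^{\beta,h,(1)}, M^{\beta,h,(2)})$; since weak convergence is metrizable it suffices to argue along an arbitrary sequence $h_n\downarrow 0$. Concretely, I would verify three inputs: that the initial values $M^{\beta,h}_0$ converge to $x$; that the predictable quadratic (co)variations converge, for each fixed $t$, to those of a standard planar Brownian motion; and that the jumps become asymptotically negligible. Given these, the theorem yields weak convergence in the Skorokhod topology to the (necessarily continuous) Gaussian process with the prescribed characteristics, which is planar Brownian motion started at $x$.

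The first two inputs are essentially already in hand. The initial value $M^{\beta,h}_0 = \pi_{V_h}(x)$ is deterministic, and since every point of $\mathbb{C}$ lies in a face of $\Gamma_h$ whose circumdisk has diameter $h$, one has $\lvert \pi_{V_h}(x) - x\rvert \le h$, so $M^{\beta,h}_0\to x$ as $h\to 0$. For the quadratic variations, Lemma~\ref{lem:quad-var} provides, for every $h>0$ and $t\ge 0$, the exact identities $\langle M^{\beta,h,(j)}\rangle_t = t$ for $j\in\{1,2\}$ and $\langle M^{\beta,h,(1)},M^{\beta,h,(2)}\rangle_t = 0$; these coincide with the characteristics of a two-dimensional Brownian motion, so the required convergence holds trivially, with equality.

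The only genuine verification left is the jump condition. As $X^h$ moves only along edges of $\Gamma_h$, every jump of $X^h$ has Euclidean modulus at most $h$ by \eqref{edgelengthbound}, whence, for all $s\ge 0$ and $h>0$,
\begin{equation*}
\bigl\lvert \Delta M^{\beta,h}_s\bigr\rvert = h^{-\beta/2}\bigl\lvert \Delta X^h_{h^\beta s}\bigr\rvert \le h^{1-\beta/2},
\end{equation*}
and $h^{1-\beta/2}\to 0$ as $h\to 0$ precisely because $\beta < 2$. In particular $\mathbb{E}\bigl[\sup_{s\le t}\lvert \Delta M^{\beta,h}_s\rvert^2\bigr]\le h^{2-\beta}\to 0$ for every $t\ge 0$, which is the asymptotic negligibility of the jumps required by the martingale central limit theorem. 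Applying the theorem (after subtracting the convergent deterministic initial values) and adding $x$ back gives convergence of $M^{\beta,h}$ to a continuous process $M$ with $M_0 = x$ which, by Lévy's characterization, is a standard planar Brownian motion; that is, $M = B$ with $B_0 = x$.

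I do not expect a substantial obstacle here: the essential computation was carried out in Lemma~\ref{lem:quad-var}, and what remains is the elementary jump estimate above together with some bookkeeping, namely citing a form of the martingale invariance principle that covers the two-dimensional, continuum-indexed case, handling the nonzero (but deterministically convergent) initial value, and identifying the limit via Lévy's characterization. The one point meriting care is that $\beta = 2$ is genuinely excluded, as it is exactly the threshold at which the bound $h^{1-\beta/2}$ on the jump sizes ceases to vanish.
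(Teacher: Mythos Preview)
Your proposal is correct and follows essentially the same route as the paper: verify the hypotheses of a martingale functional CLT using the exact quadratic-variation identities from Lemma~\ref{lem:quad-var}, the deterministic jump bound $\lvert \Delta M^{\beta,h}_s\rvert \le h^{1-\beta/2}$ coming from \eqref{edgelengthbound}, and the convergence $\pi_{V_h}(x)\to x$. The paper invokes Theorem~5.4 in \cite{helland1982} and phrases the jump condition in Lindeberg form, whereas you cite \cite{ethier1986}, Ch.~7, and use the $L^2$ negligibility $\mathbb{E}[\sup_{s\le t}\lvert \Delta M^{\beta,h}_s\rvert^2]\to 0$; both follow immediately from the same deterministic bound, so the difference is purely cosmetic.
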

	Notice, however, that such an CLT is not sufficient in order to prove Theorem \ref{thm:euclideanregime}. 
	\begin{proof}
		We prove the corollary by using the functional central limit theorems for semimartingales obtained in \cite{helland1982}:
		From Lemma \ref{lem:quad-var}, we have $\langle M^{\beta, h, (j)}, M^{\beta, h, (k)} \rangle_t = \delta_{jk}t$ for all $h > 0$. By using that $\lvert e\rvert \leq h$ for all edges $e\in E_h$,
		\begin{equation*}
		\lvert \Delta M^{\beta, h}_t \rvert \leq h^{-\frac{\beta}{2}} \sup_{e\in E_h} \lvert e \rvert \leq h^{\frac{2-\beta}{2}} \to 0 \quad \text{as } h\to 0
		\end{equation*}
		for $t\geq 0$ and $\beta < 2$, where $\Delta M^{\beta,h}_t := M^{\beta, h}_t - M^{\beta,h}_{t-}$. 
		Therefore,
		\begin{equation*}
		\sum_{s\leq t} \left\lvert \Delta M^{\beta, h}_s \right\rvert^2 \mathbbm{1}_{\{\lvert \Delta M^{\beta, h}_s\rvert > \varepsilon \}} \rightarrow 0\quad \text{as } h\to 0
		\end{equation*}
		holds for all $\varepsilon > 0$. 
		Combined with the convergence of the initial vertex $\pi_{V_h} (0) \to 0$ as $h\to 0$, this proves that the assumptions in Theorem 5.4, \cite{helland1982} are satisfied and hence the claimed convergence.
	\end{proof}
	
	\begin{remark}
		We expect that it is also possible to deduce a local central limit theorem by following the strategy of \cite{croydon2008}: 
			The required Hölder continuity of the heat kernel can be obtained from a parabolic Harnack inequality, see \cite{nash1958}, which is equivalent 
			to the volume growth property and Poincaré inequality proven below in Lemma~\ref{lem:poincare}. For similar arguments for variable speed random walks among random conductances, see, e.g., \cite{andres2021,barlow2010,kumagai2014random}.
	\end{remark}
	In contrast to the CLT, the following large deviation principle contains enough information about the tails for proving the desired short-time asymptotics.
	\begin{proposition}\label{prop:ldp}
		Let $\beta \in (0,1)$. The path measures on the space of càdlàg functions induced by $\left(X^h_{h^\beta \cdot}\right)_{h>0}$ with $X_0^{h} \rightarrow x$ as $h\rightarrow 0$ satisfy a large deviation principle with respect to the locally uniform topology, rate function \begin{equation*}
		I(\phi ) = \begin{cases}
		\int_0^\infty \Lambda^* (\partial_t \phi (t)) \text{d}t & \phi \in \mathcal{AC}_x \\
		\infty & \, \text{otherwise} \ ,
		\end{cases}
		\end{equation*}
		and speed $h^{\beta}$, where 
		\begin{equation*}
		\Lambda^* (v) = \sup_{\lambda\in\mathbb{C}}\left( \lambda \overline{v} -\frac{1}{2} \lvert \lambda\rvert^2\right) = \frac{1}{2} \lvert v\rvert^2 
		\end{equation*}
		and $\mathcal{AC}_x$ is the set of all absolutely continuous functions $\phi: \mathbb{R}_{\geq 0}\rightarrow\mathbb{C}$ with $\phi (0) = x$.
	\end{proposition}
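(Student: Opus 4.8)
The plan is to obtain the LDP through an exponential-martingale argument in which Lemma~\ref{lem:second-order-approx} collapses the logarithmic moment generating function of the displacement of $X^h_{h^\beta\cdot}$ to the quadratic $\tfrac{1}{2}\lvert\lambda\rvert^2$, after which a projective-limit argument together with an exponential tightness estimate promotes the resulting finite-dimensional LDPs to the path space. For $\mu=\mu_1+i\mu_2\in\mathbb{C}$ put
\begin{equation*}
V^\mu_h(u):=\frac{1}{2A_u}\sum_{v\sim u}\omega_{uv}\p{e^{\operatorname{Re}(\bar\mu(v-u))}-1}=\frac{(\tfrac{1}{2}\Delta_h g)(u)}{g(u)},\qquad g(z):=e^{\operatorname{Re}(\bar\mu z)}.
\end{equation*}
By Dynkin's formula, localized along the stopping times $\tau_k$ as in the proof of Lemma~\ref{lem:quad-var} (to justify the martingale property for the unbounded $g$, then removed using moment bounds), the process $N^{h,\mu}_t:=\exp\p{\operatorname{Re}(\bar\mu(X^h_t-X^h_0))-\int_0^t V^\mu_h(X^h_s)\,\text{d}s}$ is a mean-one martingale. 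Writing $w=\operatorname{Re}(\bar\mu(v-u))$ and using $\lvert v-u\rvert\le h$ from \eqref{edgelengthbound}, expand $e^{w}-1=w+\tfrac{1}{2}w^2+O\p{\lvert w\rvert^3 e^{\lvert w\rvert}}$: the linear term is $\tfrac{1}{2}\Delta_h$ of an affine function, hence $0$; the quadratic term is $\tfrac{1}{4}\Delta_h$ of $z\mapsto(\operatorname{Re}(\bar\mu z))^2=\mu_1^2x^2+2\mu_1\mu_2xy+\mu_2^2y^2$, hence $\tfrac{1}{2}\lvert\mu\rvert^2$, both by Lemma~\ref{lem:second-order-approx}; and the remainder is estimated by the bounds on $\omega_{uv}$, the maximal degree and $A_u\ge\kappa_1 h^2$ from Lemma~\ref{lem:bounded-weights}. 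Altogether $\sup_{u\in V_h}\big\lvert V^\mu_h(u)-\tfrac{1}{2}\lvert\mu\rvert^2\big\rvert\le C\lvert\mu\rvert^3 h\,e^{\lvert\mu\rvert h}$ with $C$ independent of $h$ and $\mu$; for $\mu=h^{-\beta}\lambda$ this bound is $C\lvert\lambda\rvert^3 h^{1-3\beta}e^{h^{1-\beta}\lvert\lambda\rvert}$, so after integrating over $[0,h^\beta t]$ and multiplying by the speed $h^\beta$ the error is $O(h^{1-\beta})\to0$ --- and it is precisely here that isoradiality and the hypothesis $\beta<1$ enter.

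From $\mathbb{E}\big[N^{h,h^{-\beta}\lambda}_{h^\beta t}\big]=1$ and the estimate above one obtains, uniformly in the starting vertex,
\begin{equation*}
h^\beta\log\mathbb{E}\Big[\exp\Big(h^{-\beta}\operatorname{Re}\big(\bar\lambda(X^h_{h^\beta t}-X^h_0)\big)\Big)\Big]\xrightarrow{\ h\to0\ }\frac{t}{2}\lvert\lambda\rvert^2,
\end{equation*}
and by the Markov property this uniformity lets one iterate over a partition $0=t_0<\dots<t_k$, so that the joint logarithmic moment generating function of the increments $X^h_{h^\beta t_j}-X^h_{h^\beta t_{j-1}}$ converges to $\sum_{j}\tfrac{1}{2}(t_j-t_{j-1})\lvert\lambda_j\rvert^2$, a function that is finite and smooth on $(\mathbb{R}^2)^k$. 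The Gärtner--Ellis theorem then yields an LDP for the increments with speed $h^\beta$ and good rate function $\sum_j\tfrac{\lvert v_j\rvert^2}{2(t_j-t_{j-1})}$; the contraction principle together with $X^h_0\to x$ transfers this to an LDP for $(X^h_{h^\beta t_1},\dots,X^h_{h^\beta t_k})$ with rate $\sum_j\tfrac{\lvert\phi(t_j)-\phi(t_{j-1})\rvert^2}{2(t_j-t_{j-1})}$, where $\phi(0):=x$; and the Dawson--Gärtner projective-limit theorem upgrades these consistent finite-dimensional LDPs to an LDP on $\mathbb{C}^{[0,\infty)}$ with the topology of pointwise convergence, whose rate function is the supremum of such sums over all finite partitions. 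A standard argument --- Cauchy--Schwarz for the upper bound, and Lebesgue differentiation together with the characterization of $H^1$ by finite quadratic variation for the lower bound --- shows this supremum equals $\int_0^\infty\tfrac{1}{2}\lvert\partial_t\phi(t)\rvert^2\,\text{d}t$ for $\phi\in\mathcal{AC}_x$ and $\infty$ otherwise, which is the asserted rate function $I$.

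It then remains to replace pointwise convergence by the locally uniform topology, and for this it suffices to establish exponential tightness in the latter, that is, $\lim_{\delta\to0}\limsup_{h\to0}h^\beta\log\mathbb{P}\big(\sup_{0\le s<t\le T,\ t-s\le\delta}\lvert X^h_{h^\beta t}-X^h_{h^\beta s}\rvert>\eta\big)=-\infty$ for all $T,\eta>0$. This follows by applying Doob's maximal inequality to the nonnegative martingales $N^{h,\mu}$ with $\mu\in\{\pm\rho h^{-\beta},\pm i\rho h^{-\beta}\}$, which control $\pm\operatorname{Re}$ and $\pm\operatorname{Im}$ of the displacement, together with the bound $V^\mu_h\le\lvert\mu\rvert^2$ valid for $h$ small (depending on $\rho$): the probability that the displacement over a window of length $\delta$ exceeds $\eta$ is then at most $e^{-h^{-\beta}\rho(\eta/6-\delta\rho)}$, and optimizing over $\rho$ gives a bound of order $\exp(-c\eta^2/(\delta h^\beta))$ that survives a union bound over the $O(T/\delta)$ windows. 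The resulting equicontinuity, with the deterministic bound on $X^h_0$, produces level sets compact in the locally uniform topology; combined with the pointwise LDP and the inverse contraction principle, this gives the claimed LDP on the space of càdlàg functions with the locally uniform topology. (An alternative, more automatic route is to verify the hypotheses of the Feng--Kurtz theory of large deviations for Markov processes, noting that $h^{-\beta}e^{-f/h^\beta}\tfrac{h^\beta}{2}\Delta_h e^{f/h^\beta}\to\tfrac{1}{2}\lvert\nabla f\rvert^2$ by the same expansion.)

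The conceptual heart is the cancellation turning $V^\mu_h$ into $\tfrac{1}{2}\lvert\mu\rvert^2$ up to a genuinely negligible error, which is immediate from Lemma~\ref{lem:second-order-approx}. The technically most delicate step is the exponential tightness: it requires the quantitative, starting-vertex-uniform form of the exponential-martingale estimate, careful bookkeeping of the union bound over time windows, and a gluing of the compact-containment bounds over the unbounded time horizon $[0,\infty)$.
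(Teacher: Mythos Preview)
Your argument is correct, but it follows a different route from the paper. The paper obtains the LDP almost immediately by invoking a general large-deviations result for semimartingales due to Puhalskii: one only has to verify that (i) $X^h$ is a martingale, (ii) the rescaled predictable quadratic covariation equals $\delta_{jk}t$ (this is exactly Lemma~\ref{lem:quad-var}), and (iii) a uniform small-jump condition holds, which amounts to $h^{1-\beta}\to0$ and is where $\beta<1$ enters. The upgrade from the Skorokhod to the locally uniform topology then follows from a second result of Puhalskii, using that the rate function is infinite off the continuous paths.

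Your approach instead builds the LDP from first principles: exponential martingales and a Taylor expansion of $V_h^\mu$ give the scaled log-moment generating function via Lemma~\ref{lem:second-order-approx}, Gärtner--Ellis gives the finite-dimensional LDPs, Dawson--Gärtner passes to the projective limit, and an exponential-tightness estimate via Doob's inequality upgrades the topology. This is longer and more hands-on, but it is entirely self-contained and makes the role of isoradiality very transparent: the linear and quadratic cancellations in $V_h^\mu$ are precisely the two assertions of Lemma~\ref{lem:second-order-approx}, and $\beta<1$ is exactly what makes the cubic remainder $O(h^{1-\beta})$ vanish. The paper's route is much shorter but relies on a fairly heavy black box; yours trades brevity for transparency and would transfer more readily to settings where Puhalskii's semimartingale framework is not directly available.
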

	
	
	\begin{proof}
		The proof hinges on Corollary 6.4 in \cite{puhalskii1994}, so we check its conditions $(0)$, $(C_0)$, $(\sup A)$, and $(VS)$. Due to the technicality of this conditions, we omit stating them in detail.
		$(\sup A)$ is only of importance for semimartingales and 
		is thus trivially satisfied in our case since $X^h$ is a martingale.
		The convergence $X_0^h \rightarrow x$ as $h\rightarrow 0$ implies $(0)$.
		By Lemma~\ref{lem:quad-var},
		\begin{equation*}
		h^{-\beta} \left\langle X^{h,(j)}_{h^\beta\cdot},X^{h,(k)}_{h^\beta\cdot}\right\rangle_t = \langle M^{\beta, h, (j)},M^{\beta, h, (k)}\rangle_t = \delta_{jk}t
		\end{equation*}
		for all $t\geq0$ and $j,k\in\{1,2\}$, which implies $(C_0)$ with $C_t = t\operatorname{Id}$. 
		In order to show $(VS)$, let $a>0$ and use the upper bound $\lvert \Delta X_s^h\rvert \leq \sup_{e\in E_h} \lvert e \rvert \leq h$ to get
		\begin{equation*}
		0\leq \sum_{s\leq t} \mathbbm{1}_{\{h^{-\beta} \lvert\Delta X_s^h\rvert > a\}} \leq \sum_{s\leq t} \mathbbm{1}_{\{\Delta X_s^h\neq 0\}} \mathbbm{1}_{\{h^{1-\beta} > a\}}\ .
		\end{equation*}
		for all $t\geq 0$. The latter sum is zero if $h^{1-\beta} \leq a$, which is always satisfied for sufficiently small $h$ since $\beta < 1$. 
		Hence, 
		\begin{equation*}
		\sum_{s\leq t} \mathbbm{1}_{\{h^{-\beta} \lvert\Delta X_s^h\rvert > a\}} = 0
		\end{equation*}
		for all $h\leq a^{\frac{1}{1-\beta}}$.
		This implies $(VS)$. Now, we can apply Corollary 6.4 in \cite{puhalskii1994} in order to obtain the LDP in Skorokhod topology. 
		The desired LDP with respect to the finer locally uniform topology follows since the rate function is infinite for all non-continuous functions, see Theorem~C in \cite{puhalskii1994B}.
	\end{proof}
	
	\begin{corollary}\label{cor:ldp}
		For $\beta \in (0,1)$, $T > 0$ and an open, non-empty set $U\subset\mathbb{C}$,
		\begin{equation*}
		\begin{split}
		\liminf_{\substack{h\rightarrow 0 \\ x' \to x}} h^\beta \log \mathbb{P}_{x'}[X_{h^\beta T}^h \in U] &\geq -\frac{1}{2T} \inf_{u\in U} \lvert u-x\rvert^2 \quad \text{and} \\
		\limsup_{\substack{h\rightarrow 0 \\ x' \to x}} h^\beta \log \mathbb{P}_{x'}[X_{h^\beta T}^h \in \overline{U}] &\leq -\frac{1}{2T} \inf_{u\in \overline{U}} \lvert u-x\rvert^2\ ,
		\end{split}
		\end{equation*}
		where $\overline{U}$ is the closure of $U$.
	\end{corollary}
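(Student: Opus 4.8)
The plan is to deduce the corollary directly from the path-space large deviations principle of Proposition~\ref{prop:ldp} by applying the bounds \eqref{eq:ldp-bounds} to the preimages of $U$ and $\overline U$ under the evaluation map at time $T$, and then to compute the relevant infima of the rate function $I$. Writing $\mu_h$ for the path measure on càdlàg functions induced by $(X^h_{h^\beta\cdot})$, we have $\mathbb{P}[X^h_{h^\beta T}\in U] = \mu_h(A_U)$ with $A_U := \{\phi : \phi(T)\in U\}$, and likewise for $\overline U$. Since uniform convergence on $[0,T]$ forces convergence at $t=T$, the evaluation functional $\phi\mapsto\phi(T)$ is continuous in the locally uniform topology, so $A_U$ is open and $A_{\overline U}$ is closed. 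Applying \eqref{eq:ldp-bounds} to $A_U$ (for which $\mathring A_U = A_U$) and to $A_{\overline U}$ (for which $\overline{A_{\overline U}} = A_{\overline U}$) therefore gives
\begin{align*}
\liminf_{h\to 0} h^\beta\log\mathbb{P}[X^h_{h^\beta T}\in U] &\geq -\inf_{\phi\in A_U} I(\phi), \\
\limsup_{h\to 0} h^\beta\log\mathbb{P}[X^h_{h^\beta T}\in \overline U] &\leq -\inf_{\phi\in A_{\overline U}} I(\phi).
\end{align*}

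Next I would show that $\inf\{I(\phi) : \phi\in\mathcal{AC}_x,\ \phi(T)=y\} = \tfrac{1}{2T}\lvert y-x\rvert^2$ for every $y\in\mathbb{C}$. The lower bound is Cauchy--Schwarz: for $\phi\in\mathcal{AC}_x$ with $\phi(T)=y$,
\begin{equation*}
\lvert y-x\rvert^2 = \Bigl\lvert \int_0^T \partial_t\phi(t)\,\mathrm{d}t \Bigr\rvert^2 \leq T\int_0^T \lvert\partial_t\phi(t)\rvert^2\,\mathrm{d}t = 2T\int_0^T \Lambda^*(\partial_t\phi(t))\,\mathrm{d}t \leq 2T\,I(\phi),
\end{equation*}
and equality holds for the path interpolating linearly between $x$ and $y$ on $[0,T]$ and staying constant afterwards, which belongs to $\mathcal{AC}_x$ and has $I(\phi) = \tfrac{1}{2T}\lvert y-x\rvert^2 < \infty$. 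In particular, non--absolutely--continuous paths, having $I=\infty$, do not affect the infima. Hence $\inf_{\phi\in A_U}I(\phi) = \inf_{y\in U}\tfrac{1}{2T}\lvert y-x\rvert^2 = \tfrac{1}{2T}\inf_{u\in U}\lvert u-x\rvert^2$, and the same identity holds with $U$ replaced by $\overline U$; combining this with the two displayed inequalities yields the claim.

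I do not anticipate a genuine difficulty: the argument is the standard contraction of a path LDP along evaluation at a fixed time, and the only points requiring attention are that the passage to the locally uniform topology in Proposition~\ref{prop:ldp} is precisely what makes the evaluation map continuous (it is not continuous for the Skorokhod topology at a fixed time) and the elementary variational computation of the minimal-energy bridge between $x$ and a target point.
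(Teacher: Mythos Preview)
Your proposal is correct and follows essentially the same route as the paper: apply the path-space LDP of Proposition~\ref{prop:ldp} to the preimage of $U$ (respectively $\overline U$) under the time-$T$ evaluation, which is open (respectively closed) in the locally uniform topology, and then identify the infimum of the rate function via the straight-line bridge. Your use of Cauchy--Schwarz to justify that the linear path is optimal is a slightly more explicit version of the paper's argument, but the strategy is identical.
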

	\begin{proof}
		The set of paths $M_U = \{\phi: \mathbb{R}_+\to \mathbb{C} \mid \phi \text{ càdlàg},\, \phi(T) \in U\}$ is open in the topology of locally uniform convergence since $U$ is open. Applying the lower bound of the LDP \eqref{eq:ldp-bounds} obtained by Proposition \ref{prop:ldp}, we get
		\begin{equation*}
		\liminf_{\substack{h\rightarrow 0 \\ x' \to x}} h^\beta \log \mathbb{P}_{x'}[X^h_{h^\beta T} \in U] \geq -\inf_{\phi \in M_U} I(\phi)
		\end{equation*}
		with
		\begin{equation*}
		\inf_{\phi \in M_U} I(\phi) = \inf_{\phi \in M_U\cap \mathcal{AC}_x} \frac{1}{2}\int_0^\infty \lvert \partial_t \phi(t) \rvert^2 \text{d}t = \inf_{\phi \in M_U\cap \mathcal{AC}_x} \frac{1}{2}\int_0^T \lvert \partial_t \phi(t) \rvert^2 \text{d}t \ ,
		\end{equation*}
		where the last equality can be justified by choosing a path with $\phi(t) = \phi (T)$ for all $t\geq T$.
		Let $\bar{u} \in \argmin_{u\in \overline{U}} \lvert u-x\rvert$. The minimum of the integral is attained by the line joining $x$ and $\bar{u}$, i.e., $\bar{\phi} (t) = \frac{t}{T}\bar{u} + \left(1-\frac{t}{T}\right)x$. 
		Hence,
		\begin{equation*}
		-\inf_{\phi \in M_U} I(\phi) = -\frac{1}{2T} \lvert\bar{u}-x\rvert^2 = -\frac{1}{2T} \inf_{u\in U} \lvert u-x\rvert^2\ .
		\end{equation*}
		The upper bound is deduced analogously.
	\end{proof}
	
	\subsection{Gaussian lower bound}\label{ssec:densities}
	
	In order to finish the proof of Theorem \ref{thm:euclideanregime}, we need to extend the convergence analysis of $\mathbb{P}[X_{h^\beta t}^h \in A]$ for measurable $A\subset\mathbb{C}$ in the above LDP to the probability density of $\mathbb{P}$, i.e., its heat kernel. To this end, we apply an asymptotic lower bound (Lemma \ref{lem:uniform-lower-bound}), which we obtain by using Gaussian heat kernel estimates (Lemma \ref{lem:gaussian-bound}). The latter is proven by following the usual strategy of establishing a \emph{uniform volume growth} property and a \emph{Poincaré inequality} (Lemma \ref{lem:poincare}). The proofs of all three lemmas are deferred to the end of the this section.
	
	Let $B_r^h (u) = \{v\in V_h : d_h (v,u) \leq r\}$ denote the combinatorial $r$-ball around $u\in V_h$ for any $r\geq 0$ and 
	\begin{equation*}
		\operatorname{vol}_h (u,r) = \sum_{v\in B_r^h(u)} h^{-2} A_v \ .
	\end{equation*}
	
	\begin{lemma}\label{lem:poincare}
		Suppose Assumptions \ref{ass:bap-primal} and \ref{ass:bap-dual} hold. Then there exist constants $c_1, c_2 > 0$ independent of $n$ and $h$ such that
		\begin{equation}\tag{UVG}\label{eq:volume-doubling}
		c_1 n^2 \leq \operatorname{vol}_h(u,n) \leq c_2 n^2
		\end{equation}
		for all $n\in\mathbb{N}$, $u \in V_h$ and $h>0$. Moreover, there is a constant $C_P > 0$ independent of $n$ and $h$ such that
		\begin{equation}\label{eq:poincare}\tag{PI}
		\sum_{v\in B_n^h(u)} h^{-2} A_v ( f(v)-\bar{f}_{B_n^h(u)})^2 \leq C_P\, n^2\sum_{v,w\in B_{2n}^h(u)} \omega_{vw}(f(w)-f(v))^2
		\end{equation}
		for all $f: V_h\rightarrow \mathbb{R}$, $u\in V_h$, $n\in\mathbb{N}$, and $h>0$, where
		\begin{equation*}
		\bar{f}_{B_n^h(u)} = \frac{1}{\operatorname{vol}_h(u,n)} \sum_{v \in B^h_n(u)} h^{-2}A_v f(v)\ .
		\end{equation*}
	\end{lemma}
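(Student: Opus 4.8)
Since $c_d\le m_v\le Mc_p^{-1}$ by Lemma~\ref{lem:bounded-weights}, it suffices to estimate $\#B_n^h(u)$, and I would do this by a packing argument with the dual cells. For $v\in V_h$ let $C_v$ denote the face of $\Gamma_h^*$ dual to $v$; then $\{C_v\}_v$ tiles $\mathbb{C}$, $\operatorname{area}(C_v)=A_v\in[\kappa_1h^2,\kappa_2h^2]$ by Lemma~\ref{lem:bounded-weights}, and $C_v\subseteq\overline{B}(v,h/2)$ because every vertex of $C_v$ is a circumcenter at distance $h/2$ from $v$ and $C_v$ is a simple polygon. By Lemma~\ref{lem:spanning-tree}, $|u-v|\le hd_h^c(u,v)\le\frac{\kappa}{c_p}|u-v|$, so $B_n^h(u)$ is sandwiched between the Euclidean balls $\overline{B}(u,\frac{c_p}{\kappa}hn)$ and $\overline{B}(u,hn)$. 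Interior-disjointness of the cells of $B_n^h(u)$ inside $\overline{B}(u,hn+\frac h2)$ gives $\#B_n^h(u)\le\pi(n+\frac12)^2/\kappa_1$, and the fact that the cells of the vertices in $\overline{B}(u,\frac{c_p}{\kappa}hn)$ cover $\overline{B}(u,\frac{c_p}{\kappa}hn-\frac h2)$ gives $\#B_n^h(u)\ge\pi(\frac{c_p}{\kappa}n-\frac12)^2/\kappa_2$. This is \eqref{eq:volume-doubling} for $n$ above a fixed multiple $n_0$ of $\kappa/c_p$; the finitely many smaller $n$ are covered by $\operatorname{vol}_h(u,n)\ge m_u\ge c_d$ and the upper bound at $n_0$.

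\textbf{Poincaré inequality.} The plan is to pull back the classical $L^2$-Poincaré inequality on Euclidean domains through a piecewise-affine interpolation adapted to the isoradial structure. First extend $f$ to $V_h^*$ by letting $f(m_\varphi)$ be the average of $f$ over the vertices of the face $\varphi$. Triangulating each face through its circumcenter gives a triangulation $\widehat\Gamma_h$ on $V_h\cup V_h^*$ whose triangles are isosceles with legs $h/2$ and base $|e|\in[c_ph,h\sqrt{1-c_d^2}]$ (the strict upper bound on $|e|$ coming from $|e|^2+|e^*|^2=h^2$ and Assumption~\ref{ass:bap-dual}), hence uniformly shape-regular in $h$; let $g\in H^1_{\mathrm{loc}}(\mathbb{C})$ be the continuous piecewise-affine interpolant of the extended $f$, and let $\Omega$ be the union of the triangulated faces incident to $B_n^h(u)$. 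Two estimates drive the argument. First, the Dirichlet energy of $g$ on a triangle is the cotangent-weighted sum of squared nodal differences; the total weight picked up by a primal edge $e=\{a,b\}$ equals $\frac12(\omega_{ab}-\omega_{ab}^{-1})$ (using $\omega_{ab}=\cot\alpha$ and apex angle $2\alpha$ with $\sin\alpha=|e|/h$), while the weights of the new interior edges and the terms $(f(a)-f(m_\varphi))^2$ are each controlled by a bounded number of neighbouring edge energies $\omega_{e'}(f(\cdot)-f(\cdot))^2$; since $\omega\in[c_d,c_p^{-1}]$, this yields $\int_\Omega|\nabla g|^2\le C\sum_{a\sim b,\,a,b\in B_{2n}^h(u)}\omega_{ab}(f(a)-f(b))^2$, where the inclusion in $B_{2n}^h(u)$ uses that a face of $\Gamma_h$ has at most $\pi/c_p$ sides (perimeter $\le\pi h$, edges $\ge c_ph$) and so combinatorial diameter $\le\lfloor\pi/(2c_p)\rfloor$, whence $\Omega$ only involves vertices in $B_{n+1+\lfloor\pi/(2c_p)\rfloor}^h(u)\subseteq B_{2n}^h(u)$ once $n\ge n_0$. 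Second, the mass form of an affine function on a shape-regular triangle is comparable to the sum of squares of its nodal values, so summing over the triangles of $\Omega$ gives $\int_\Omega|g-c|^2\ge C^{-1}h^2\sum_{v\in B_n^h(u)}(f(v)-c)^2\ge C^{-1}\sum_{v\in B_n^h(u)}m_v(f(v)-c)^2$ for every $c\in\mathbb{R}$, using $A_v\asymp h^2$ and $m_v\asymp1$. Now $\Omega$ is a union of uniformly shape-regular triangles forming a thickened combinatorial ball whose boundary irregularity lives at scale $h\ll\operatorname{diam}\Omega\asymp hn$, hence (for $n\ge n_0$) a uniform (John) domain with constants independent of $h$, so the Euclidean Poincaré inequality gives $\inf_c\int_\Omega|g-c|^2\le C(hn)^2\int_\Omega|\nabla g|^2$. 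Chaining the three bounds and using that $\bar f_{B_n^h(u)}$ minimises $c\mapsto\sum_{v\in B_n^h(u)}m_v(f(v)-c)^2$ proves \eqref{eq:poincare} for $n\ge n_0$ with a constant independent of $n$ and $h$. For $n<n_0$, the graph induced on $B_n^h(u)$ is connected with at most $c_2n_0^2$ vertices, degree at most $M$, and weights in $[c_d,c_p^{-1}]$, and a single application of Cauchy--Schwarz along shortest paths together with $\operatorname{vol}_h(u,n)\ge c_d$ yields \eqref{eq:poincare} with a constant depending only on $n_0$, i.e.\ only on $c_p,c_d,M$.

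\textbf{Main obstacle.} The delicate step is the uniformity in $h$ of the Euclidean Poincaré constant on $\Omega$: one must check that the thickened combinatorial ball (or its hole-filled version) is a uniform (John) domain with constant bounded independently of $h$ and of $n\ge n_0$. This is morally clear because the roughness of $\partial\Omega$ sits at scale $h$, far below $\operatorname{diam}\Omega$, but it does require a genuine argument. A secondary, purely bookkeeping issue is fitting the interpolation into the \emph{doubled} ball $B_{2n}^h(u)$ rather than some larger combinatorial multiple, which is exactly what the a-priori bound $\pi/c_p$ on the number of sides of a face, and the separate treatment of small $n$, are for. Alternatively, both \eqref{eq:volume-doubling} and \eqref{eq:poincare} — at least with $B_{Kn}^h(u)$ in place of $B_{2n}^h(u)$ for some $K=K(c_p,c_d,M)$ — follow from the quasi-isometry invariance of volume doubling and the Poincaré inequality for graphs of bounded geometry (Kanai; Coulhon--Saloff-Coste), since $(\Gamma_h,d_h^c)$ is quasi-isometric to $\mathbb{Z}^2$ with constants independent of $h$ by Lemma~\ref{lem:spanning-tree} and the uniform degree and weight bounds; there the only subtlety is to ensure that the comparison constants do not degenerate as $h\to0$.
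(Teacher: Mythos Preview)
Your volume-growth argument is essentially the paper's: both pack/cover with dual cells and convert between $A_v$ and $m_v$ via Lemma~\ref{lem:bounded-weights}. For the Poincar\'e inequality, however, the paper takes a completely different route, following Angel--Benjamini--Horesh: it writes the left side as $\tfrac{1}{2}\operatorname{vol}_h(u,n)\,\mathbb{E}[(f(Z)-f(Z'))^2]$ for $Z,Z'$ i.i.d.\ with law $m_v/\operatorname{vol}_h(u,n)$ on $B_n^h(u)$, telescopes $f(Z)-f(Z')$ along a path $[Z,Z']$ built from the faces crossed by the segment joining independent uniform points $\widehat Z,\widehat Z'$ in the dual cells, applies Cauchy--Schwarz, and closes with the geometric estimate $\mathbb{P}[w\in[Z,Z']]\le C/n$ (obtained by bounding the density of $\widehat Z$ and reducing to the probability that a random chord of the unit disk meets a ball of radius $1/n$). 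This is entirely graph-intrinsic and sidesteps your main obstacle: no continuous Poincar\'e inequality, no interpolant, and no John-domain uniformity are needed. Your interpolation scheme is a legitimate alternative and makes the link to Euclidean analysis transparent, but as you yourself flag, the uniformity in $h$ of the Poincar\'e constant on $\Omega$ is not proved, so the argument as written has a gap; the quasi-isometry route you sketch at the end would close it (and is the cleanest fix), at the cost of landing on $B_{Kn}^h(u)$ rather than $B_{2n}^h(u)$---a cosmetic discrepancy that the paper's own proof also glosses over (it produces $B_{cn}^h(u)$ and then asserts one may replace $c$ by $2$) and that is immaterial for Delmotte's theorem downstream.
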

	
	One can interpret the uniform volume growth property and Poincaré inequality in Lemma~\ref{lem:poincare} as being associated to a weighted graph $(V_h, E_h)$ with edge weights $\omega_{uv}$ and vertex weights $h^{-2} A_u$ for $u,v\in V_h$. The Laplacian of this graph reads $h^2 \Delta_h$, where $\Delta_h$ is defined in Equation \eqref{eq:laplacian}.
	
	Using \eqref{eq:volume-doubling} and \eqref{eq:poincare}, we derive a Gaussian lower bound for the heat kernel $\tilde{p}^h$ of the process generated by $\frac{h^2}{2} \Delta_h$, i.e., the solution of
	\begin{equation*}
	\partial_t \tilde{p}_t^h (u,\cdot) = \frac{h^2}{2} \Delta_h \tilde{p}_t^h (u,\cdot ) 
	\end{equation*}
	for $t>0$ with $\tilde{p}_0^h (u,v) = h^{2} A_u^{-1}$ if $u=v$ and $0$ otherwise. 
	Notice that $\tilde{p}^h$ satisfies 
	\begin{equation*}
	p_{t}^h = h^{-2} \tilde{p}^h_{h^{-2}t}
	\end{equation*}
	for all $t\geq 0$, where $p^h$ is defined in Section \ref{ssec:discrete-hk}.
	
	\begin{lemma}\label{lem:gaussian-bound}
		Suppose Assumptions \ref{ass:bap-primal} and \ref{ass:bap-dual} hold. Then, there exist constants $C_l, c_l > 0$ independent of $h$ such that
		\begin{equation*}
		\tilde{p}_t^h (u,v) \geq \frac{c_l}{\operatorname{vol}_h (u,\sqrt{t})} e^{-C_l d_h (u,v)^2 / t}
		\end{equation*}
		for all $u,v\in V_h$, $h>0$ and $t\geq d_h (u,v)$.
	\end{lemma}
	
	Although it is well known that in many situations a volume growth property and a Poincaré inequality (as in Lemma \ref{lem:poincare}) are equivalent to Gaussian heat kernel estimates, there exist, to the best of our knowledge, no corresponding result for variable speed random walks on graphs with arbitrary vertex measure in the literature. Due to the bounds in Lemma \ref{lem:bounded-weights}, however, an adaptation of the proof in \cite{delmotte1999} to our case is straightforward, see our discussion in the proof of Lemma~\ref{lem:gaussian-bound} below. With the Gaussian lower bound we can prove:
	
	\begin{lemma}\label{lem:uniform-lower-bound}
		Suppose Assumptions \ref{ass:bap-primal} and \ref{ass:bap-dual} hold. 
		For $\beta \in (0,1)$ and $t>0$
		\begin{equation*}
		\liminf_{\substack{h \rightarrow 0 \\ \lvert u-v\rvert \rightarrow 0}} h^\beta \log p_{h^\beta t}^h (u,v) \geq 0 \ .
		\end{equation*}
	\end{lemma}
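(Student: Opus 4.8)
The plan is to derive, from Delmotte's characterization \cite{delmotte1999} together with Lemma~\ref{lem:poincare}, a Gaussian \emph{lower} bound on $p^h$ with constants independent of $h$, and then to insert the time $h^\beta t$ and let $h\to 0$; the hypothesis $\beta<1$ will guarantee that $h^\beta t$ falls comfortably into the regime where that bound is informative. For the first step I would argue as follows. By Lemma~\ref{lem:bounded-weights} the edge weights $\omega_e$ and the vertex weights $m_v$ are bounded above and below independently of $h$, while $A_v$ is comparable to $h^2$; in particular $A_v$ and $h^2 m_v$ are comparable with $h$-independent constants. Hence the volume growth \eqref{eq:volume-doubling} gives $\sum_{w\in B^h_n(u)}A_w\asymp h^2 n^2$, so the measure $w\mapsto A_w$ is volume doubling on combinatorial balls uniformly in $h$; and multiplying the Poincaré inequality \eqref{eq:poincare} by $\sup_v (A_v/m_v)\asymp h^2$ turns it into a Poincaré inequality for the Dirichlet form $f\mapsto\sum_{v\sim w}\omega_{vw}(f(w)-f(v))^2$ with respect to the measure $A$ at scale $hn$, which by Lemma~\ref{lem:spanning-tree} is, up to constants, the Euclidean radius of $B^h_n(u)$. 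These are precisely the hypotheses under which the arguments of \cite{delmotte1999} produce two-sided Gaussian estimates, now for the variable-speed walk generated by $\tfrac12\Delta_h$; writing $\operatorname{vol}^A_h(u,r):=\sum_{w\in V_h,\,|w-u|\le r}A_w$, the lower estimate I want is that there are $c_1,c_2>0$ independent of $h$ with
\begin{equation*}
p^h_t(u,v)\ \geq\ \frac{c_1}{\operatorname{vol}^A_h(u,\sqrt t)}\exp\!\Big(-c_2\,\frac{|u-v|^2}{t}\Big)
\end{equation*}
for all $u,v\in V_h$ and all $t$ above the lattice scale, say $t\geq h^2$. Since each $A_w$ is comparable to $h^2$ and a Euclidean ball of radius $r\geq h$ contains $\asymp r^2/h^2$ vertices of $\Gamma_h$ (again by Lemma~\ref{lem:spanning-tree} and \eqref{eq:volume-doubling}), one has $\operatorname{vol}^A_h(u,r)\asymp r^2$ for $r\geq h$, with $h$-independent constants.

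Granting this, the rest is bookkeeping. Because $\beta<1<2$, for all sufficiently small $h$ one has $h^\beta t\geq h^2$, so the estimate applies with $\operatorname{vol}^A_h(u,\sqrt{h^\beta t})\asymp h^\beta t$; taking logarithms and multiplying by $h^\beta$ yields a constant $c_3>0$ with
\begin{equation*}
h^\beta\log p^h_{h^\beta t}(u,v)\ \geq\ h^\beta\big(\log c_3-\beta\log h-\log t\big)-c_2\,\frac{|u-v|^2}{t}\,.
\end{equation*}
As $h\to 0$ the first term tends to $0$ because $\beta>0$ (so $h^\beta\to0$ and $h^\beta\log h\to0$), while $|u-v|^2/t\to0$ along any sequence with $|u-v|\to0$ and $t>0$ fixed; hence the right-hand side tends to $0$, which gives the claimed inequality. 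Note this also explains the role of $\beta<1$: it forces $h^\beta t$ to be large compared with both the lattice scale $h^2$ and (in combinatorial units) with $d^c_h(u,v)\lesssim |u-v|/h$, so that we are squarely in the range of validity of the Gaussian lower bound.

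The delicate point is the first step: making Delmotte's equivalence apply to $p^h$ with genuinely $h$-uniform constants. As stated in \cite{delmotte1999} the result concerns the heat kernel of the weighted graph Laplacian, whose speed measure is $m$, whereas $p^h$ is the heat kernel of the variable-speed walk, whose speed measure is $A$. The clean route is the translation above, in which the comparability of $A$ with $h^2 m$ converts the volume growth and Poincaré data of Lemma~\ref{lem:poincare} into the corresponding properties for the Dirichlet form with speed measure $A$, and these are what drive the Gaussian estimates. Alternatively one may pass through the constant-speed walk $Y^h$, which shares the jump chain of $X^h$ and satisfies $X^h_t=Y^h_{A_t}$ with the deterministic bounds $b_1h^{-2}t\le A_t\le b_2h^{-2}t$ for $h$-independent $b_1,b_2>0$, and then exploit that the constant-speed kernel varies only by bounded factors over the time window $[b_1h^{-2}t,\,b_2h^{-2}t]$ in the diffusive regime; but keeping every normalization uniform in $h$ along this route is the real work. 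Everything downstream of the Gaussian lower bound is the routine limit above.
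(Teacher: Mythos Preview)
Your argument is correct and shares the backbone of the paper's proof: feed Lemma~\ref{lem:poincare} into Delmotte's machinery \cite{delmotte1999}, extract a Gaussian lower bound with $h$-independent constants, insert the time $h^\beta t$, and let $h\to 0$. The difference is in how the mismatch between the speed measure $A$ of $X^h$ and the conductance measure $m$ is handled. Your primary route transfers \eqref{eq:volume-doubling} and \eqref{eq:poincare} from $m$ to $A$ via the comparability $A_v\asymp h^2 m_v$ and then invokes a Gaussian lower bound for the variable-speed walk directly; this is clean, but it appeals to a version of Delmotte's equivalence for general speed measures that goes slightly beyond the constant-speed statement actually in \cite{delmotte1999}. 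The paper instead takes precisely what you call the ``alternative'' route: it introduces the constant-speed walk $Y^h$ with generator $\widetilde{\Delta}_h$ as an explicit time change of $X^h$, applies \cite{delmotte1999} verbatim to obtain $\tilde p^h_t(u,v)\geq \frac{c_l m_v}{\operatorname{vol}_h(u,\sqrt t)}\,e^{-C_l d^c_h(u,v)^2/t}$ for $t\geq d^c_h(u,v)$, and then uses the deterministic bound $\tau^{-1}(s)\geq \frac{c_d}{2\kappa_2}h^{-2}s$ on the inverse time change (from Lemma~\ref{lem:bounded-weights}) to pull this back to $p^h$. So your direct approach trades the time-change bookkeeping for a slightly broader citation, while the paper keeps the citation exact at the cost of that extra step; the downstream limit computation is identical in both.
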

	
	Before proving Lemma \ref{lem:poincare}, \ref{lem:gaussian-bound} and \ref{lem:uniform-lower-bound}, we complete the proof of Theorem~\ref{thm:euclideanregime} by using Corollary \ref{cor:ldp} and Lemma \ref{lem:uniform-lower-bound}. The arguments are similar to those given in \cite{varadhan1967diffusion}, Section 4. 
	
	\begin{proof}[Proof of Theorem \ref{thm:euclideanregime}] \emph{Lower bound:}
		Let $\varepsilon > 0$. For all $x,y\in\mathbb{C}$, the Chapman--Kolmogorov equation gives
		\begin{equation*}
		\begin{split}
		p_t^h (x,y) &= \sum_{z\in V_h} A_z\, p^h_{(1-\varepsilon)t}(x,z) \, p^h_{\varepsilon t} (z,y) \geq \sum_{z\in V_h \cap B_\delta (y)} A_z\, p_{(1-\varepsilon)t}^h(x,z) \, p_{\varepsilon t}^h (z,y) \\
		&\geq \left(\inf_{z\in B_\delta (y)} p_{\varepsilon t}^h (z,y)\right) \sum_{z\in V_h \cap B_\delta (y)} A_z\, p_{(1-\varepsilon)t}^h (x,z) \ ,
		\end{split}
		\end{equation*}
		where $B_\delta (y)$ denotes the Euclidean $\delta$-ball around $y$ for any $\delta > 0$.
		By using Corollary \ref{cor:ldp}, the bound in Lemma \ref{lem:uniform-lower-bound}, and taking $\delta\rightarrow 0$, we get
		\begin{equation*}
		\begin{split}
		\liminf_{\substack{h\rightarrow 0 \\ x' \rightarrow x \\ y'\rightarrow y}}&\ h^\beta \log p_{h^\beta t}^h (x',y') \\ &\geq \liminf_{\substack{h\rightarrow 0 \\ x' \rightarrow x \\ y'\rightarrow y}} h^\beta \left(\inf_{z\in B_\delta (y)} \log p_{\varepsilon h^\beta t}^h (z,y') + \log \mathbb{P}_{x'}\left[X^h_{(1-\varepsilon)h^\beta t} \in B_\delta (y)\right] \right) \\
		&\geq -\frac{1}{2t (1-\varepsilon)} \lvert x-y\rvert^2\ .
		\end{split}
		\end{equation*}
		Since $\varepsilon >0$ is arbitrary, the lower bound holds true. \\
		\emph{Upper bound:} For $\delta > 0$ and $y'\in B_\delta (y)$
		\begin{equation*}
		p_{h^\beta t}^h (x,y') \leq \sum_{z\in \overline{B}_\delta (y)} p_{h^\beta t}^h (x,z)\ . 
		\end{equation*}
		By Corollary \ref{cor:ldp},
		\begin{equation*}
		\begin{split}
		\limsup_{\substack{h \rightarrow 0 \\ x'\rightarrow x \\ y'\rightarrow y}} h^\beta &\log p_{h^\beta t}^h (x',y') \\
		&\leq \limsup_{\substack{h \rightarrow 0 \\ x' \rightarrow x \\ y'\rightarrow y}} h^\beta\, \log \mathbb{P}_{x'}\left[X^h_{h^\beta t} \in \overline{B}_\delta (y)\right] \leq -\frac{1}{2t} \inf_{z\in \overline{B}_{\delta}(y)} \lvert z-x\rvert^2\ .
		\end{split}
		\end{equation*}
		We obtain the desired upper bound by letting $\delta \rightarrow 0$.
	\end{proof}
	
	Now, we prove Lemma \ref{lem:poincare}, \ref{lem:gaussian-bound} and \ref{lem:uniform-lower-bound}.
	
	\begin{proof}[Proof of Lemma \ref{lem:poincare}]
		By the definition of $A_u$ and Lemma \ref{lem:spanning-tree},
		\begin{equation*}
		\sum_{v\in B_n^h (u)} A_v \leq \pi \max_{v\,:\, d_h (u,v) = n+1} \lvert v-u\rvert^2 \leq \pi h^2 (n+1)^2 \leq 4 \pi h^2 n^2
		\end{equation*}
		for all $n\in\mathbb{N}$ because the Euclidean ball of radius $\max_{v\,:\, d_h (u,v) = n+1} \lvert v-u\rvert$ centered in $u$ covers the union of all dual faces corresponding to vertices in the combinatorial $n$-ball $B_n^h (u)$. 
		
		In order to obtain the lower bound notice that $d_h (u,v) \geq \frac{\kappa}{c_p}$ for $u,v\in V_h$ implies by Lemma \ref{lem:spanning-tree} that $\lvert v-u \rvert\geq h$. Fix $n\in\mathbb{N}$ such that $n\geq \frac{\kappa}{c_p}$. Then, the Euclidean ball with radius $$\min_{v\,:\, d_h (u,v) = n} \left(\lvert v-u \rvert - h\right)$$ centered in $u$ is contained in the union of dual faces corresponding to vertices in $B_n^h (u)$. Hence, we obtain by Lemma \ref{lem:spanning-tree} that
		\begin{equation*}
		\sum_{v\in B_n^h (u)} A_v \geq \pi \min_{v\,:\, d_h (u,v) = n} \left(\lvert v-u \rvert - h\right)^2 \geq \pi h^2 \left(\frac{c_p}{\kappa}  n - 1\right)^2,
		\end{equation*}
		which implies the existence of a constant $\bar{c}>0$ satisfying
		\begin{equation*}
		\sum_{v\in B_n^h (u)} A_v \geq \bar{c} h^2 n^2 \ .
		\end{equation*}
		By possibly adjusting the constant $\bar{c}$ further for finitely many $n$ 
		and using the uniform lower bound on $A_v$ in Lemma \ref{lem:bounded-weights} we get such a uniform lower bound for all  $n\in\mathbb{N}$. This proves \eqref{eq:volume-doubling}.
		
		In order to prove the Poincaré inequality \eqref{eq:poincare}, we follow the approach of Theorem 3.4 in \cite{angel2016} based on the method of canonical paths originally developed in \cite{diaconis1991,jerrum1989}: 
		Fix $n\in\mathbb{N}$ and a function $f : V_h \rightarrow \mathbb{R}$. Let $Z$ and $Z'$ be two i.i.d. $B_n^h (u)$-valued random variables with distribution 
		$$\mathbb{P}[Z=v] = \frac{h^{-2}A_v}{\operatorname{vol}_h (u,n)}$$
		for all $v\in B_n^h(u)$. 
		Then, $\bar{f}_{B_n^h(u)} = \mathbb{E}[f(Z)]$ and we get
		\begin{equation}\label{eq:poinc-exp}
		\begin{split}
		\sum_{v\in B_n^h(u)} h^{-2} A_v\,(f(v)-&\bar{f}_{B_n^h(u)})^2 = \operatorname{vol}_h (u,n)\, \mathbb{E} \left[ (f(Z) - \mathbb{E}[ f(Z)])^2  \right]\\
		&= \operatorname{vol}_h (u,n)\left( \mathbb{E}\left[f^2 (Z)\right] - \mathbb{E}[f(Z)]^2\right)\\
		&= \frac{\operatorname{vol}_h (u,n)}{2}\mathbb{E}\left[ f^2(Z) + f^2(Z') -2f(Z)f(Z')\right]\\
		&=\frac{\operatorname{vol}_h (u,n)}{2}\mathbb{E}\left[ (f(Z)-f(Z'))^2\right]\ .
		\end{split}
		\end{equation}
		For any vertex $v\in V_h$, let $\widehat{v}$ be an independent random variable that is uniformly distributed in the corresponding dual face $D_v$ of $v$, i.e.,
		\begin{equation*}
		\mathbb{P}\left[\widehat{v}\in F\right] = \frac{\lambda(F\cap D_v)}{A_v}
		\end{equation*}
		for all measurable sets $F\subset\mathbb{C}$, where $\lambda$ denotes the Lebesgue measure on $\mathbb{C}$.
		We choose a (possibly random) path $[v,w]\in \mathfrak{X}(v,w)$ for all combinations of two vertices $v,w\in B_n^h (u)$ 
		consisting only of vertices adjacent to faces crossed by the line segment connecting $\widehat{v}$ and $\widehat{w}$.
		The identity
		\begin{equation*}
		f(v)-f(w) = \sum_{\{z_i,z_{i+1}\}\in [v,w]} f(z_{i+1})-f(z_i)
		\end{equation*}
		and the Cauchy--Schwarz inequality show
		\begin{equation}\label{eq:path-telescoping}
		\left(f(v)-f(w) \right)^2 \leq n( [v,w] ) \sum_{\{z_i,z_{i+1}\}\in [v,w]} \left( f(z_{i+1})-f(z_i) \right)^2 \ ,
		\end{equation}
		where as before $n(\gamma )$ denotes the combinatorial length of a path $\gamma$.
		By using Theorem 1 in \cite{xia2013}, i.e., the same argument as in the proof of Lemma \ref{lem:spanning-tree}, and Assumption \ref{ass:bap-primal}, we obtain
		\begin{equation*}
		n( [v,w] ) \leq \frac{\kappa}{c_p h} \lvert w-v\rvert \ ,
		\end{equation*}
		where $\kappa$ is the same constant as in Lemma \ref{lem:spanning-tree}. Then, Lemma \ref{lem:spanning-tree} and $v,w\in B_n^h (u)$ yield
		\begin{equation*}
		n( [v,w] ) \leq \frac{2 \kappa}{c_p} n \ .
		\end{equation*}
		Combining this with Equation \eqref{eq:poinc-exp}, the bound \eqref{eq:path-telescoping}, $\operatorname{vol}_h(u,n)\leq c_2n^2$ by \eqref{eq:volume-doubling}, and Lemma \ref{lem:bounded-weights},
		we get
		\begin{equation*}
		\begin{split}
		\sum_{v\in B_n^h(u)} h^{-2} A_v &\left( f(v)-\bar{f}_{B_n^h(u)}\right)^2 \leq C_1 n^3\,\mathbb{E}\left[ \sum_{\{z_i,z_{i+1}\}\in [Z,Z']} \left( f(z_{i+1})-f(z_i) \right)^2\right] \\
		&\leq C_2 n^3\,\mathbb{E}\left[ \sum_{v,w\in B_{cn}^h (u)} 
		\omega_{vw} \left( f(w)-f(v) \right)^2\, \mathbbm{1}_{\{w\in [Z,Z']\}}\right] \\
		&= C_2 n^3 \sum_{v,w\in B_{cn}^h (u)} \omega_{vw} \left( f(w)-f(v) \right)^2\, \mathbb{P}[w\in [Z,Z']]
		\end{split}
		\end{equation*}
		with constants $C_1,C_2>0$ and $c\geq 1$ such that $[v,w]$ is in $B_{cn}^h(u)$ for all $v,w\in B_n^h (u)$. Without loss of generality, we can choose $c=2$ as the sum on the left only depends on the values of $f$ on $B_n^h (u)$. 
		By using similar arguments as in \cite{angel2016},~Theorem 3.4 we will now show that there is a constant $C_3 > 0$ with
		\begin{equation}\label{eq:line-crossing}
		\mathbb{P}[w\in [Z,Z']] \leq \frac{C_3}{n}
		\end{equation}
		for all $w\in B_{2n}^h (u)$. From \eqref{eq:line-crossing} we can conclude
		\begin{equation*}
		\begin{split}
		\sum_{v\in B_n^h(u)} h^{-2} A_v \left( f(v)-\bar{f}_{B_n^h(u)}\right)^2 \leq C_P\, n^2 \sum_{v,w\in B_{2n}^h(u)} \omega_{vw} (f(w)-f(v))^2
		\ ,
		\end{split}
		\end{equation*}
		which is \eqref{eq:poincare}.
		
		In order to prove inequality \eqref{eq:line-crossing}, let $\widehat{Z}$ and $\widehat{Z}'$ be independent random variables uniformly distributed in the dual faces of the random vertices $Z$ and $Z'$, respectively, i.e.,
		\begin{equation*}
		\mathbb{P}\left[\widehat{Z}\in F\right] = \sum_{v\in B_{n}^h(u)} \mathbb{P}\left[\widehat{v}\in F\mid Z = v\right]\, \mathbb{P} [Z = v] = \sum_{v\in B_{n}^h(u)}\frac{\lambda (F\cap D_v)}{A_v} \frac{h^{-2} A_v}{\operatorname{vol}_h (u,n)} 
		\end{equation*}
		for measurable sets $F\subset\mathbb{C}$. 
		By applying
		the volume growth property \eqref{eq:volume-doubling} once again we get
		\begin{equation*}
		\mathbb{P}\left[\widehat{Z}\in F\right] \leq \frac{C_4}{n^2 h^2} \lambda (F)
		\end{equation*}
		and thus the following bound on the density of $\widehat{Z}$ with respect to $\lambda$, which exists by the Radon--Nikodym theorem:
		\begin{equation*}
		\frac{\text{d}\widehat{Z}}{\text{d}\lambda} \leq \frac{C_4}{n^2 h^2} \ .
		\end{equation*}
		Let $A$ be the event that the line segment connecting $\widehat{Z}$ and $\widehat{Z}'$ intersects with $B_h(w)$. 
		By Lemma \ref{lem:spanning-tree}, there is a constant $C_5$ such that the Euclidean ball $B_{C_5nh}(w)$ of radius $C_5nh$ around $w$ satisfies $$\operatorname{supp} \widehat{Z} = \bigcup_{v \in B_{n}^h (u)} D_v \subset B_{C_5nh}(w)$$
		for all $w\in B^h_{2n} (u)$.
		We obtain
		\begin{equation*}
		\begin{split}
		\mathbb{P}[w\in [Z,Z']] 
		= \int_{\mathbb{C}} \int_{\mathbb{C}} \mathbbm{1}_A\, \text{d}\widehat{Z} \text{d}\widehat{Z}' \leq \int_{B_{C_5nh}(w)} \int_{B_{C_5nh}(w)} \mathbbm{1}_A \cdot \left(\frac{C_4}{n^2 h^2}\right)^2 \text{d}x\text{d}y\ .
		\end{split}
		\end{equation*}
		A scaling argument shows that the expression on the right hand side is proportional to 
		the probability that the line segment joining two uniformly distributed points in the ball of radius $C_5$ centered in $0$ intersects with $B_{n^{-1}}(0)$. But the minimal distance between such a random line segment and the origin is a continuous random variable with finite density such that the probability of intersecting  $B_{n^{-1}}(0)$ is bounded by a constant times $n^{-1}$. This concludes the proof of \eqref{eq:line-crossing}.
	\end{proof}
	
	\begin{proof}[Proof of Lemma \ref{lem:gaussian-bound}]
		The proof of the Gaussian lower bound follows from a careful line-by-line modification of the respective implication in \cite{delmotte1999}: Since Lemma \ref{lem:bounded-weights} shows that the vertex weights $h^{-2} A_u$ are bounded from above and away from zero with constants independent of $h$, the weighted Poincaré and Sobolev--Poincaré inequalities can be deduced by using Lemma \ref{lem:poincare}. Moser's iterative technique yields the parabolic Harnack inequality, which implies on-diagonal heat kernel estimates. Then, a classical chaining argument proves Lemma \ref{lem:gaussian-bound}.
	\end{proof}
	
	\begin{remark}
		Alternatively, we can prove Lemma~\ref{lem:gaussian-bound} by using the equivalence between the Poincaré inequality with uniform volume growth property and Gaussian heat kernel estimates in the case of \emph{constant speed random walks} as established in \cite{delmotte1999}: 
		
		With the same methods as in the proof of Lemma \ref{lem:poincare} (but replacing the vertex measure $h^{-2} A_u$ by $m_u$, where $m_u$ is given by Equation \eqref{eq:alternative-weights}) and Theorem 1.7 in \cite{delmotte1999}, we obtain Gaussian estimates for the heat kernel $q^h$ of the operator
		\begin{equation*}
		\widetilde{\Delta}_h f (u) = \frac{1}{m_u} \sum_{v\sim u} \omega_{uv} (f(v)-f(u)) \ .
		\end{equation*}
		The time-continuous random walk $Y_h$ with generator $\widetilde{\Delta}_h$ is a random time-change of the process $X^h$ generated by $\frac12 \Delta_h$. It solves (see Theorem 1.1 in \cite{ethier1986}, Ch. 6)
		\begin{equation*}
		Y^h_t = X^h \left(\int_0^t \frac{2A_{Y^h_s}}{m_{Y^h_s}}\text{d}s\right) \ ,
		\end{equation*}
		where $X^h (s) := X^h_s$. 
		We denote the corresponding random time-change by
		\begin{equation*}
		\tau (t) = \int_0^t \frac{2A_{Y^h_s}}{m_{Y^h_s}}\text{d}s \ .
		\end{equation*}
		Using Lemma \ref{lem:bounded-weights}, we bound
		\begin{equation*}
		\eta^{-1}h^2 \leq \frac{2A_u}{m_u} \leq \eta h^2 \ ,
		\end{equation*}
		for some $\eta > 0$ and all $u\in V_h$, which implies 
		\begin{equation*}
		\eta^{-1}h^{-2} t \leq \tau^{-1} (t) \leq \eta h^{-2} t
		\end{equation*}
		for all $t>0$.
		In particular,
		\begin{equation*}
		\begin{split}
		\mathbb{P}_u \left[X^h_{h^{2}t} = v\right] &= \mathbb{P}_u \left[Y^h_{\tau^{-1} (h^{2}t)} = v\right] \\
		&\geq \inf_{s\in [\eta^{-1} t, \eta t]} \mathbb{P}_u \left[Y^h_{s} = v\right] = m_v \inf_{s\in [\eta^{-1} t, \eta t]} q_s (u,v) ´\ .
		\end{split}
		\end{equation*}
		Applying the Gaussian lower bound on $q^h$ proves Lemma \ref{lem:gaussian-bound}.
	\end{remark}
	
	\begin{proof}[Proof of Lemma \ref{lem:uniform-lower-bound}]
		Fix $u,v\in \mathbb{C}$ and $t > 0$. With Lemma \ref{lem:spanning-tree}, Lemma \ref{lem:gaussian-bound} and a constant $\tilde{C}_l>0$, we obtain
		\begin{equation*}
			\begin{split}
			p_{h^\beta t}^h (u,v) = h^{-2} \bar{p}^h_{h^{\beta - 2} t} (u,v) &\geq \frac{h^{-2} c_l}{\operatorname{vol}_h(u,\sqrt{h^{\beta - 2}t})} e^{-C_l \frac{d_h (u,v)^2}{h^{\beta - 2} t}} \\
			&\geq \frac{h^{-2} c_l}{\operatorname{vol}_h(u,\sqrt{h^{\beta - 2}t})} e^{-\tilde{C}_l \frac{\lvert u-v\rvert^2}{h^\beta t}} \ ,
			\end{split}
		\end{equation*}
		if $d_h (u,v) \leq h^{\beta - 2}t$, which holds for sufficiently small $h$ since $\beta < 1$. This inequality proves the lemma since
		\begin{equation*}
		\begin{split}
		\liminf_{\substack{h \rightarrow 0 \\ \lvert u-v\rvert \rightarrow 0}} h^\beta \log p_{h^\beta t}^h (u,v) &\geq \liminf_{\substack{h \rightarrow 0 \\ \lvert u-v\rvert \rightarrow 0}} \left( - h^\beta \log \operatorname{vol}_h\left(u,\sqrt{h^{\beta - 2}t}\right) - \tilde{C}_l \frac{\lvert u-v\rvert^2}{t}\right) \\
		&\geq \liminf_{h \rightarrow 0} \left( - h^\beta \log \left( h^{\beta -2} t\right)\right) = 0 \ ,
		\end{split}
		\end{equation*}
		where we used the volume growth property \eqref{eq:volume-doubling} in the last inequality.
	\end{proof}
	
	\section{Graph Regime}
	This section is devoted to proving Theorem \ref{thm:graphregime}, which describes the asymptotic behavior of $p^h_{h^\beta t}$ as $h\to 0$ for $\beta > 1$.
	As before, let $\mathfrak{X}_h(u,v)$, $u,v\in V_h$ be the set of paths $\gamma = (q_0,\dots, q_{n(\gamma)})$ in $\Gamma_h$ with $q_0 = u$, $q_{n(\gamma)} = v$ and $q_i \sim q_{i+1}$ for $0\leq i < n(\gamma)$.
	To ease notation, we write 
	$$\mathfrak{X}_h (x,y) := \mathfrak{X}_h (\pi_{V_h}(x), \pi_{V_h} (y))\quad \text{and}\quad d_h (x,y) := d_h (\pi_{V_h}(x), \pi_{V_h} (y))$$
	for all $x,y\in\mathbb{C}$, where $\pi_{V_h}$ denotes the projection to the closest vertex, see Section \ref{ssec:discrete-hk}.
	
	\begin{theorem}[Graph regime]\label{thm:graphregime}
		Let $(\Gamma_h)_{h>0}$ be a family of isoradial graphs and suppose that Assumptions \ref{ass:bap-primal} and \ref{ass:bap-dual} hold. Fix $x,y\in\mathbb{C}$.
		Then,
		\begin{equation*}
		\lim_{h\rightarrow 0} \left[ \frac{h}{\log h^{\beta-1}} \log p^{h}_{h^\beta t} (x,y) - hd_h (x,y)\right] = 0 \ .
		\end{equation*}
		holds for $\beta > 1$ and $t>0$.
	\end{theorem}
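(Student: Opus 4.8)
\emph{Outline:} The plan is to pin down $\log p^h_{h^\beta t}(x,y)$ up to an additive error of order $h^{-1}$, namely to show that $\log p^h_{h^\beta t}(x,y) = (\beta-1)\,d^c_h(x,y)\log h + O(h^{-1})$ as $h\to0$. Granting this, the theorem follows immediately: writing $n:=d^c_h(x,y)$ and $s:=h^\beta t$, Lemma~\ref{lem:spanning-tree} gives $n=\Theta(h^{-1})$ when $x\neq y$ (the case $x=y$ is trivial, see below), so that $\frac{h}{\log h^{\beta-1}}\log p^h_s(x,y)-h\,d^c_h(x,y)=\frac{h\cdot O(h^{-1})}{(\beta-1)\log h}=\frac{O(1)}{(\beta-1)\log h}\to0$. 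I repeatedly use two consequences of Lemma~\ref{lem:bounded-weights}: the jump rate $\lambda(u)=m_u/(2A_u)$ of $X^h$ is bounded above and below by constant multiples of $h^{-2}$, say $\lambda_{\min}\le\lambda(u)\le\lambda_{\max}$ with $\lambda_{\min},\lambda_{\max}\asymp h^{-2}$, and every one-step transition probability $\omega_{uv}/m_u$ is bounded below by a constant $\rho\in(0,1]$ independent of $h$. The feature that drives the argument is that the expected number of jumps in time $s$ is of order $\lambda_{\max}s\asymp h^{\beta-2}$, which is of strictly smaller order than $n\asymp h^{-1}$ since $h^{\beta-2}/h^{-1}=h^{\beta-1}\to0$ for $\beta>1$; thus $X^h$ is in the far tail of its jump count and, to reach $y$, must essentially traverse a combinatorial geodesic.

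\emph{Upper bound:} To occupy $\pi_{V_h}(y)$ at time $s$, the walk must have made at least $n$ jumps, so $p^h_s(x,y)\le\mathbb{P}[N_s\ge n]$, where $N_s$ counts the jumps on $[0,s]$. Since all rates are $\le\lambda_{\max}$, the inter-jump times stochastically dominate i.i.d.\ $\mathrm{Exp}(\lambda_{\max})$ variables, whence $\mathbb{P}[N_s\ge n]\le\mathbb{P}[\mathrm{Poisson}(\lambda_{\max}s)\ge n]$; and because $\lambda_{\max}s/n\to0$, this tail is at most twice its leading term $e^{-\lambda_{\max}s}(\lambda_{\max}s)^n/n!$. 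Stirling's formula together with $\log(\lambda_{\max}s)=(\beta-2)\log h+O(1)$ and $\log n=-\log h+O(1)$ then gives $\log\mathbb{P}[N_s\ge n]\le(\beta-1)\,n\log h+O(n)+O(h^{\beta-2})=(\beta-1)\,n\log h+O(h^{-1})$, using that $h^{\beta-2}\le h^{-1}$ for $\beta\ge1$.

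\emph{Lower bound:} Fix a combinatorial geodesic $q_0=\pi_{V_h}(x),q_1,\dots,q_n=\pi_{V_h}(y)$, and bound $p^h_s(x,y)$ below by the probability of the event that $X^h$ takes its first $n$ steps along this geodesic, that the $n$-th jump occurs before time $s/2$, and that no jump occurs afterwards on $[s/2,s]$. Since the jump chain and the holding times factorise, this probability is at least $\rho^n\,\mathbb{P}[T_1+\dots+T_n\le s/2]\,e^{-\lambda_{\max}s}$, where the $T_i$ are independent with $T_i\sim\mathrm{Exp}(\lambda(q_{i-1}))$ and all rates $\ge\lambda_{\min}$. Dominating the $T_i$ by i.i.d.\ $\mathrm{Exp}(\lambda_{\min})$ variables and invoking the identity $\mathbb{P}[T_1+\dots+T_n\le r]=\mathbb{P}[\mathrm{Poisson}(\lambda r)\ge n]$ for i.i.d.\ $\mathrm{Exp}(\lambda)$ summands, one gets $\mathbb{P}[T_1+\dots+T_n\le s/2]\ge\mathbb{P}[\mathrm{Poisson}(\lambda_{\min}s/2)\ge n]\ge e^{-\lambda_{\min}s/2}(\lambda_{\min}s/2)^n/n!$, whose logarithm equals $(\beta-1)\,n\log h+O(h^{-1})$ by the same Stirling estimate as above. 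Since $|n\log\rho|=O(h^{-1})$ and $\lambda_{\max}s=O(h^{-1})$, this yields $\log p^h_s(x,y)\ge(\beta-1)\,n\log h+O(h^{-1})$, matching the upper bound.

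\emph{Remaining case and the main obstacle:} When $x=y$ one has $n=0$, and the claim is immediate from $e^{-\lambda_{\max}s}\le p^h_s(x,x)\le1$, since then $\frac{h}{\log h^{\beta-1}}\log p^h_s(x,x)$ lies between $0$ and $O(h^{\beta-1}/\log h)\to0$. The only genuine work should be the bookkeeping in the Poisson and Stirling estimates: one must verify that every error term---from Stirling's formula, from the normalisation of the Poisson weight, and from replacing the true rates by $\lambda_{\min}$ or $\lambda_{\max}$---is $O(h^{-1})$, which ultimately rests on $\lambda_{\max}s\asymp h^{\beta-2}$ being $O(h^{-1})$ for $\beta\ge1$, so that it is killed upon multiplication by $h/\log h^{\beta-1}$. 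Note that no property of isoradial graphs beyond Lemmas~\ref{lem:spanning-tree} and~\ref{lem:bounded-weights} enters---in particular Lemma~\ref{lem:second-order-approx} is not used---which is why the argument extends to more general families of graphs.
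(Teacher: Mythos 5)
Your argument is correct, but it takes a genuinely different route from the paper. The paper imports the two-sided path bounds of \cite{metzger2000} as a black box (Lemma \ref{lem:heat-bounds}): an upper bound with the factor $\left(eM\alpha_2 t/(h^2 d^c_h)\right)^{d^c_h}$ and a lower bound obtained from a single optimal path, after which the proof of Theorem \ref{thm:graphregime} is pure logarithmic bookkeeping via Lemma \ref{lem:spanning-tree}. You instead reprove the needed two-sided estimate from scratch: for the upper bound you track only the jump count, dominating it by a Poisson variable of mean $\lambda_{\max}h^\beta t\asymp h^{\beta-2}$ and exploiting that the target $n=d^c_h(x,y)\asymp h^{-1}$ lies deep in its tail; for the lower bound you force the walk along one combinatorial geodesic, paying $\rho^n$ for the skeleton and a Gamma--Poisson tail for the holding times. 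Both routes isolate the same leading term $d^c_h\log\bigl(h^{\beta-2}/d^c_h\bigr)=(\beta-1)\,d^c_h\log h+O(h^{-1})$, which the prefactor $h/\log h^{\beta-1}$ converts into $h\,d^c_h(x,y)+o(1)$; your version is self-contained and makes the probabilistic mechanism (far tail of the jump count) explicit, at the cost of redoing the coupling and Stirling estimates, while the paper's version is shorter given the cited bound, and both use nothing about isoradiality beyond Lemmas \ref{lem:spanning-tree} and \ref{lem:bounded-weights}, consistent with the remark after the theorem. One small wording fix: in your lower bound the event should be ``no further jump after the $n$-th one up to time $s$'' rather than ``no jump on $[s/2,s]$'' (a jump between the arrival time and $s/2$ would spoil the literal event); the bound $\rho^n\,\mathbb{P}[T_1+\dots+T_n\le s/2]\,e^{-\lambda_{\max}s}$ you compute is exactly the probability of the corrected event, so nothing else changes.
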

	
	\begin{remark}
		Theorem \ref{thm:graphregime} holds true for any family of weighted graphs with uniformly bounded degree satisfying the properties in Lemma \ref{lem:spanning-tree} and Lemma \ref{lem:bounded-weights} (with possibly different constants). The proof does neither require isoradiality nor any specific properties of $\Delta_h$.
	\end{remark}
	
	\begin{example} 
		Let $\Gamma_h = h\mathbb{Z}^2$ for all $h>0$. In this case,
		\begin{equation*}
		\lim_{h\rightarrow 0} hd_h (x,y) = \lVert x-y\rVert_1\ ,
		\end{equation*}
		so Theorem \ref{thm:graphregime} yields
		\begin{equation*}
		\lim_{h\rightarrow 0} \frac{h}{\log h^{\beta-1}} \log p^{h}_{h^\beta t} (x,y) = \lVert x-y\rVert_1
		\end{equation*}
		for all $x,y\in\mathbb{C}$ and $\beta > 1$.
	\end{example}
	
	Recall from Lemma~\ref{lem:bounded-weights} that the Assumptions \ref{ass:bap-primal} and~\ref{ass:bap-dual} imply uniform bounds
	\begin{equation*}
	c_d \leq \inf_{e\in E_h} \omega_{e} \leq \sup_{e\in E_h} \omega_{e} \leq c_p^{-1}
	\end{equation*}
	on the edge weights, uniform bounds
	\begin{equation*}
	\kappa_1 h^{2} \leq \inf_{u\in V_h} A_u \leq \sup_{u\in V_h} A_u \leq \kappa_2 h^2\ .
	\end{equation*}
	on the dual areas, and a uniform bound $M$ on the maximal degree of $\Gamma_h$.
	In particular, there exists a constant $\alpha >0$ such that 
	$$\alpha^{-1} \leq h^2 \mu_{uv} \leq \alpha$$
	for all $u\sim v$ with $\mu_{uv} = \frac{\omega_{uv}}{A_u}$.
	In order to prove Theorem \ref{thm:graphregime}, we will use the following bounds derived in \cite{metzger2000}.
	
	\begin{lemma}[\cite{metzger2000}, Theorem 1]\label{lem:heat-bounds}
		Let $\mathfrak{X}_h(x,y)$ denote the set of paths from $x$ to $y$ in $\Gamma_h$.
		For all $x,y\in \mathbb{C}$ with $x\neq y$, $t\geq 0$ and $h>0$,
		\begin{equation*}
		p_t^h (x,y) \geq \frac{e^{-\alpha h^{-2} M t}}{\sqrt{2\pi}} \sup_{\gamma\in \mathfrak{X}_h(x,y)}\left(\prod_{e\in \gamma}\frac{t\mu_{e}}{\lvert \gamma \rvert}\right)
		\end{equation*}
		and
		\begin{equation*}
		p_t^h (x,y) \leq e^{\alpha h^{-2} Mt+1} \left(\sup_{\gamma\in\mathfrak{X}_h(x,y)} \prod_{e\in\gamma} \frac{h^2 \mu_{e}}{\alpha }\right) \left(\frac{eM\alpha t}{h^2 d_h (x,y)}\right)^{d_h(x,y)} \ .
		\end{equation*}
	\end{lemma}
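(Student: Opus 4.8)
Since the lemma is \cite{metzger2000}, Theorem~1 specialised to the chain $X^h$ — which jumps from $u$ to a neighbour $v$ at rate $\tfrac12\mu_{uv}$, holds at $u$ with total rate $\lambda(u)=\tfrac12\sum_{v\sim u}\mu_{uv}$, has degree at most $M$, and satisfies $\alpha_1h^{-2}\le\mu_e\le\alpha_2h^{-2}$ on every edge — the task is only to recall the argument, which I would organise around the series representation of the heat semigroup; throughout I abbreviate $d:=d^c_h(x,y)$. Starting from $p^h_t(x,y)=\bigl(e^{\frac t2\Delta_h}\bigr)_{xy}$, I write $\tfrac12\Delta_h=-\Lambda\,\mathrm{Id}+Q$ with $\Lambda:=\sup_{u\in V_h}\lambda(u)\le\tfrac12 M\alpha_2h^{-2}$, so that $Q$ has only nonnegative entries (off-diagonal $\tfrac12\mu_{uv}\ge0$, diagonal $\Lambda-\lambda(u)\ge0$) and
\[
p^h_t(x,y)=e^{-\Lambda t}\sum_{k\ge0}\frac{t^k}{k!}\,(Q^k)_{xy}
\]
is a sum of nonnegative terms. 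The combinatorial key is that $(Q^k)_{xy}$ is a sum over length-$k$ walks from $x$ to $y$ whose steps are either a loop (weight $\Lambda-\lambda(\cdot)$) or an edge (weight $\tfrac12\mu_e$); since $y$ cannot be reached from $x$ in fewer than $d$ edge-steps, $(Q^k)_{xy}=0$ for all $k<d$.

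For the \emph{lower bound} I would fix any path $\gamma\in\mathfrak X_h(x,y)$, set $n=|\gamma|\ge d$, and retain only the $k=n$ term. A length-$n$ walk reaching $y$ must use an edge at every step, so the walk tracing $\gamma$ contributes exactly $\prod_{\{u,v\}\in\gamma}\tfrac12\mu_{uv}$ to $(Q^n)_{xy}$, whence
\[
p^h_t(x,y)\ge e^{-\Lambda t}\,\frac{1}{n!}\prod_{\{u,v\}\in\gamma}\frac{t\mu_{uv}}{2}.
\]
A Stirling-type bound for $n!$ together with $\Lambda\le\alpha_2h^{-2}M$ then shows that the right-hand side is at least $\tfrac{1}{\sqrt{2\pi}}e^{-\alpha_2h^{-2}Mt}\prod_{\{u,v\}\in\gamma}\tfrac{t\mu_{uv}}{n}$, and taking the supremum over $\gamma$ gives the claimed lower estimate.

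For the \emph{upper bound} I would bound every term. Every entry of $Q$ is at most $\tfrac12 M\alpha_2h^{-2}=:\widetilde\Lambda$, and at most $(M+1)^k$ walks of length $k$ start at $x$. Pulling a factor $\tfrac12\alpha_2h^{-2}$ out of each edge-step leaves the product $\prod_e h^2\mu_e/\alpha_2$ over the edge-steps; since each such factor is $\le1$, loop-erasing the walk to a path $\gamma\in\mathfrak X_h(x,y)$ only increases this product, so it is $\le\sup_{\gamma}\prod_{\{u,v\}\in\gamma}h^2\mu_{uv}/\alpha_2$. Hence $(Q^k)_{xy}\le\bigl(\sup_{\gamma}\prod_{\{u,v\}\in\gamma}\tfrac{h^2\mu_{uv}}{\alpha_2}\bigr)\bigl((M+1)\widetilde\Lambda\bigr)^k$ for $k\ge d$ and $0$ otherwise, and summing the exponential tail via $\sum_{k\ge d}\tfrac{s^k}{k!}\le\tfrac{s^d}{d!}e^s\le(es/d)^d e^s$ together with $d!\ge(d/e)^d$ yields $p^h_t(x,y)\le e^{\widetilde cs}\bigl(\sup_{\gamma}\prod_e h^2\mu_e/\alpha_2\bigr)(es/d)^d$ with $s$ and $\widetilde c$ of order $h^{-2}$; rearranging the constants as in \cite{metzger2000} produces the displayed inequality.

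The part I expect to cost the most work is this term-by-term estimate for the upper bound: one must organise the walk count by the number of edge-steps, verify that loop-erasure never decreases $\prod_e h^2\mu_e/\alpha_2$ so that the supremum over paths can be factored out uniformly in $h$, and then confirm that the resulting geometric-type series collapses to the precise closed form with $h$-independent constants. The lower bound is comparatively routine once the nonnegative series and the vanishing of its first $d-1$ terms are available; the only subtlety there is that for $k=|\gamma|$ every contributing walk is already a path, so no loop-correction enters.
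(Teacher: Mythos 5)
The paper does not prove this lemma itself --- it is imported from \cite{metzger2000} --- so your reconstruction stands on its own; your framework is the standard one behind such bounds: write $\tfrac12\Delta_h=-\Lambda\,\mathrm{Id}+Q$ with $\Lambda=\sup_u\lambda(u)$ and $Q\geq 0$ entrywise, expand $p^h_t(x,y)=e^{-\Lambda t}\sum_{k\geq 0}\tfrac{t^k}{k!}(Q^k)_{xy}$, and use $(Q^k)_{xy}=0$ for $k<d:=d^c_h(x,y)$. The lower bound goes through as you sketch it: keeping only the walk tracing a fixed $\gamma$ with $n=n(\gamma)$ edges gives $p^h_t(x,y)\geq e^{-\Lambda t}\tfrac{1}{n!}\prod_{\{u,v\}\in\gamma}\tfrac{t\mu_{uv}}{2}$, and since $n!\leq\sqrt{2\pi n}\,(n/e)^n e^{1/(12n)}$ while $(e/2)^n\geq\sqrt{n}\,e^{1/(12n)}$ for all $n\geq 1$, the Stirling factor $e^n$ absorbs both the $2^{-n}$ coming from the generator $\tfrac12\Delta_h$ and the $\sqrt{n}$, yielding exactly the stated constant $1/\sqrt{2\pi}$ together with $e^{-\Lambda t}\geq e^{-\alpha_2 Mh^{-2}t}$. (Your claim that a length-$n$ walk ending at $y$ must use an edge at every step is false for $n>d$, but harmless: you only need the one walk along $\gamma$.)

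The upper bound, however, has a genuine gap. Bounding $(Q^k)_{xy}$ by the number of length-$k$ walks from $x$ times the maximal entry of $Q$ to the $k$-th power gives $(Q^k)_{xy}\leq S\bigl((M+1)\tfrac12 M\alpha_2 h^{-2}\bigr)^k$ with $S:=\sup_{\gamma\in\mathfrak{X}_h(x,y)}\prod_{\{u,v\}\in\gamma}h^2\mu_{uv}/\alpha_2$, and summing the tail as you do produces $p^h_t(x,y)\leq S\,e^{s}\,(es/d)^{d}$ with $s=\tfrac12 M(M+1)\alpha_2 h^{-2}t$. For $M\geq 2$ this is strictly weaker than the displayed inequality: the exponent overshoots $\alpha_2 Mh^{-2}t+1$ by a term of order $h^{-2}t$, and the power term carries an extra factor $\bigl((M+1)/2\bigr)^{d}$ with $d\to\infty$ as $h\to 0$, so no ``rearranging of constants'' recovers the statement. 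The missing idea is that one should not count walks at all: $Q$ has constant row sums, $\sum_v Q_{uv}=(\Lambda-\lambda(u))+\lambda(u)=\Lambda$, and after pulling the factor $h^2\mu_e/\alpha_2\leq 1$ out of every edge entry (your loop-erasure step is fine) the remaining nonnegative matrix has row sums at most $\Lambda+\tfrac12 M\alpha_2 h^{-2}\leq M\alpha_2 h^{-2}$, whence $(Q^k)_{xy}\leq S\,(M\alpha_2 h^{-2})^k$ for $k\geq d$ with no combinatorial factor. Your tail estimate then gives $p^h_t(x,y)\leq S\,e^{\alpha_2 Mh^{-2}t}\bigl(eM\alpha_2 t/(h^2 d)\bigr)^{d}$, which is the lemma. (Your weaker constants would in fact still suffice for the use made of the lemma in the proof of Theorem \ref{thm:graphregime}, but they do not prove the statement as written.)
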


	For $x\neq y$ and $t\geq 0$, the bounds on the edge weights and the lower bound in Lemma \ref{lem:heat-bounds} imply
	\begin{equation}\label{eq:lower-bound-improved}
	\begin{split}
	p_{h^\beta t}^h (x,y) &\geq \frac{e^{-\alpha h^{\beta-2} M t}}{\sqrt{2\pi}} \sup_{k\geq d_h (x,y)} \left(\frac{\alpha^{-1} h^{\beta-2} t}{k}\right)^{k} \\
	&\geq \frac{e^{-\alpha h^{\beta-2} M t}}{\sqrt{2\pi}} \left(\frac{\alpha^{-1} h^{\beta-2}t}{d_h (x,y)}\right)^{d_h (x,y)}
	\end{split}
	\end{equation}
	for $\beta > 1$ and sufficiently small $h>0$ since then $d_h (x,y) > \alpha^{-1} h^{\beta -2}t$ and for any $c>0$ the function $\mathbb{R}_{>0}\rightarrow\mathbb{R}: x\mapsto \left(\frac{c}{x}\right)^x$ is monotonically decreasing on~$[c, \infty)$. 
	
	\begin{proof}[Proof of Theorem \ref{thm:graphregime}]
		By using the monotonicity of the logarithm and Lemma \ref{lem:heat-bounds}, we estimate
		\begin{align*}
		\log\,&p_{h^\beta t}^h (x,y) \\
		&\leq \alpha Mh^{\beta-2}t +1 + \log \left(\sup_{\gamma\in\mathfrak{X}_h(x,y)} \prod_{e\in\gamma} \frac{h^2 \mu_{e}}{\alpha}\right) + d_h(x,y) \log \left( \frac{eM\alpha h^{\beta-2}t}{d_h (x,y)}\right) \\
		&\leq \alpha Mh^{\beta-2}t +1+d_h(x,y) \left[ \log (eM\alpha h^{\beta-2}t ) - \log d_h (x,y)\right]\ ,
		\end{align*}
		where the last inequality follows by using that $\mu_e \leq \alpha$ for all $e\in E_h$.
		Therefore, with 
		\begin{equation*}
		\lim_{h\rightarrow 0} \frac{hd_h(x,y)}{\log h^{\beta-2}} = 0,
		\end{equation*}
		which holds by Lemma \ref{lem:spanning-tree},
		we obtain
		\begin{equation*}
		\begin{split}
		\lim_{h\rightarrow 0}\bigg[\frac{h}{\log h^{\beta -1}} & \log p_{h^\beta t}^h (x,y)-hd_h (x,y)\bigg] \\
		\leq &\lim_{h\rightarrow 0} \left[hd_h(x,y) \left( \frac{\log (h^{\beta-2} ) - \log d_h (x,y)}{\log h^{\beta -1}} - 1\right)\right] \ .
		\end{split}
		\end{equation*}
		Since 
		\begin{equation*}
		\lim_{h\rightarrow 0}-\frac{\log d_h (x,y) }{\log h} = 1
		\end{equation*}
		by Lemma \ref{lem:spanning-tree}, we get
		\begin{equation*}
		\lim_{h\rightarrow 0} \frac{\log (h^{\beta-2} ) - \log d_h (x,y)}{\log h^{\beta -1}} = \frac{\beta -2}{\beta -1} + \frac{1}{\beta -1} = 1\ ,
		\end{equation*}
		which -- combined with the fact that $hd_h (x,y)$ is bounded in $h$ -- proves
		\begin{equation*}
		\limsup_{h\rightarrow 0}\bigg[\frac{h}{\log h^{\beta -1}} \log p_{h^\beta t}^h (x,y)-hd_h (x,y)\bigg] \leq 0 \ .
		\end{equation*}
		Similarly, the lower bound \eqref{eq:lower-bound-improved} yields
		\begin{equation*}
		\log p_{h^\beta t}^h (x,y) \geq -\alpha M h^{\beta-2} - \log \sqrt{2\pi} + d_h (x,y) \left[\log (\alpha^{-1} h^{\beta-2}) - \log d_h (x,y)\right] \ .
		\end{equation*}
		With the same arguments as above,
		\begin{equation*}
		\liminf_{h\rightarrow 0}\left[\frac{h}{\log h^{\beta -1}} \log p_{h^\beta t}^h (x,y)-hd_h (x,y)\right] \geq 0 \ .
		\end{equation*}
		This concludes the proof.
	\end{proof}
	
	\printbibliography
\end{document}